\newtheorem{theorem}{Theorem}[section]
\newtheorem{lemma}[theorem]{Lemma}
\newtheorem{corollary}[theorem]{Corollary}
\newtheorem{proposition}[theorem]{Proposition}
\theoremstyle{definition}
\newtheorem{definition}[theorem]{Definition}
\newtheorem{example}[theorem]{Example}
\newtheorem{remark}[theorem]{Remark}
\definecolor{red}{RGB}{255,0,0}
\definecolor{green}{RGB}{0,255,0}
\definecolor{blue}{RGB}{0,0,255}
\definecolor{yellow}{RGB}{255,255,0}
\definecolor{cyan}{RGB}{0,255,255}
\definecolor{magenta}{RGB}{255,0,255}
\definecolor{orange}{RGB}{255,128,0}
\DeclareMathOperator{\lcm}{lcm}
\begin{document}


\title{{Powers of monomial ideals and the Ratliff--Rush operation}}

\author{Oleksandra Gasanova}

\maketitle


\begin{abstract}
Powers of (monomial) ideals is a subject that still calls attraction in various ways. In this paper we present a nice presentation of high powers of ideals in a certain class in
$ \mathbb K[x_1, \ldots, x_n]$ and $\mathbb K[[x_1, \ldots, x_n]]$. As an interesting application it leads to an algorithm for computation of the Ratliff--Rush operation on ideals in that class. The Ratliff--Rush operation itself has several applications, for instance, if $I$ is a regular $\mathfrak m$-primary ideal in a local ring $(R,m)$, then the Ratliff--Rush associated ideal $\tilde I$ is the unique largest ideal containing $I$ and having the same Hilbert polynomial as $I$.
\end{abstract}




\section{Introduction}
Let $R$ be a commutative Noetherian ring and $I$ a regular ideal in it, that is, an ideal containing a non-zerodivisor. The Ratliff--Rush ideal associated to $I$ is defined as $\tilde I=\cup_{k\ge 0}(I^{k+1}:I^k)$. For simplicity we will call it the Ratliff--Rush closure of $I$, even though it does not preserve inclusion, as shown in \cite{Sw}. In \cite{R-R} it is proved that $\tilde I$ is the unique largest ideal that satisfies $I^l=\tilde I^l$ for all large $l$. An ideal $I$ is called Ratliff--Rush if $I=\tilde I$. Properties of the Ratliff--Rush closure and its interaction with other algebraic operations have been studied by several authors, see \cite{Jo}, \cite{He}, \cite{R-R}, \cite{Sw}. In particular, we would like to mention the following two results. If $I$ is an $\mathfrak m$-primary ideal in a local ring $(R,\mathfrak m)$, then $\tilde I$ is the unique largest ideal containing $I$ with the same Hilbert polynomial (the length of ($R/I^l$) for sufficiently large $l$) as $I$. It is also known that the associated graded ring $\oplus_{k\ge 0}I^k/I^{k+1}$ has positive depth if and only if all powers of $I$ are Ratliff--Rush (see \cite{He} for a proof). Several unexpected connections of the Ratliff--Rush closure are discussed in \cite{K}, \cite{Bo} and most recently \cite{Ca} and \cite{Tr}. In general, the Ratliff--Rush closure is hard to compute. In \cite {EL} the author presents an algorithm for Cohen-Macaulay Noetherian local rings, which, however, relies on finding generic elements. In this article we describe a constructive algorithm for computing the Ratliff--Rush closure of $\mathfrak m$-primary monomial ideals of a certain class (we will call it a class of good ideals) in $\mathbb K[x_1,\ldots,x_n]$, which also works in the local ring $\mathbb K[[x_1,\ldots,x_n]]$. This is a generalization of algorithms described in \cite{AA} and \cite{Ve}. 

In Section~3 we introduce the notion of a good ideal. The idea is as follows: any $\mathfrak m$-primary monomial ideal has some $x_1^{d_1},\ldots,x_n^{d_n}$ as minimal generators and therefore defines a (non-disjoint) covering of $\mathbb N^n$ with rectangular "boxes" $B_{a_1,\ldots,a_n}$ of sizes $d_1,\ldots,d_n$, where $a_1,\ldots,a_n$ are nonnegative integers and

$$B_{a_1,\ldots,a_n}:=([a_1d_1,(a_1+1)d_1]\times\ldots\times[a_nd_n,(a_n+1)d_n])\cap\mathbb N^n.$$
Then $I$ is called a good ideal if it satisfies the so-called box decomposition principle, namely, if for any positive integer $l$ any minimal generator of $I^l$ belongs to some box $B_{a_1,\ldots,a_n}$ with $a_1+\ldots+a_n=l-1$. We will also discuss a necessary and a sufficient condition for being a good ideal. From this point, unless specifically mentioned, we will work with good ideals.

In Section~4 we will associate an ideal to each box in the following way: if $I$ is a good ideal and $B_{a_1,\ldots,a_n}$ is some box, then it contains some of the minimal generators of $I^l$, where $l=a_1+\ldots+a_n+1$. Since they are in $B_{a_1,\ldots,a_n}$, they are divisible by $(x_1^{d_1})^{a_1}\cdots(x_n^{d_n})^{a_n}$. Therefore, we can define 

$$I_{a_1,\ldots,a_n}:=\left\langle{\frac{m}{(x_1^{d_1})^{a_1}\cdots(x_n^{d_n})^{a_n}}\mid m \in B_{a_1,\ldots, a_n} \cap G(I^l) }\right\rangle.$$ 
We will conclude this section by showing that $$I_{a_1,\ldots, a_n}=I^l:\langle(x_1^{d_1})^{a_1}\cdots(x_n^{d_n})^{a_n}\rangle,$$ which immediately implies the following property: if $(a_1,\ldots,a_n)\le(b_1,\ldots,b_n)$, then $I_{a_1,\ldots, a_n}\subseteq I_{b_1,\ldots, b_n}$.

In Section~5 we will study the asymptotic behaviour of $I_{a_1,\ldots,a_n}$. Now that we know that $I_{a_1,\ldots,a_n}$ grows when $(a_1,\ldots,a_n)$ grows, and given that ideals can not grow forever, we are expecting some sort of stabilization in $I_{a_1,\ldots,a_n}$ when $(a_1,\ldots,a_n)$ is large enough. In other words, we are expecting some pattern on $I^l$ for large $l$.
In Section~6 we will prove the main theorem of this paper, namely, the following: if $I$ is a good ideal, then 
$$\tilde{I}=I_{q_1,0,\ldots,0}\cap I_{0,q_2,\ldots,0}\cap\ldots\cap I_{0,\ldots,0,q_n},$$
where $I_{q_1,0,\ldots,0}$ is the stabilizing ideal of the chain $I_{0,0,\ldots,0}\subseteq I_{1,0,\ldots,0}\subseteq I_{2,0,\ldots,0}\subseteq\ldots$, and $I_{0,q_2,\ldots,0}$ is the stabilizing ideal of the chain $I_{0,0,\ldots,0}\subseteq I_{0,1,\ldots,0}\subseteq I_{0,2,\ldots,0}\subseteq\ldots$, and so on. The pattern eatablished in Section~5 will play an important role in the proof of the main theorem.
In Section~7 we will show that computation of $I_{0,0,\ldots,q_i,0,\ldots,0}$ is much easier than it seems. In particular, we will show that the corresponding chain stabilizes immediately as soon as we have two equal ideals.
Section~8 contains examples and explicit computations of $\tilde I$. We use Singular (\cite{DGPS}) for all our computations.

In Section~9 we discuss how to detect whether a given ideal is a good one if it satisfies the necessary condition and does not satisfy the sufficient condition from Section~3.
In Section~10 we discuss the following question: are powers of good ideals also good? Unfortunately, the answer is negative in most cases. We also give a definition of a very good ideal: if $I_{a_1,\ldots,a_n}=I$ for all $(a_1,\ldots,a_n)$, then such an ideal is called a very good ideal and all its powers are Ratliff--Rush.

In Section~11 we discuss the connection of the above results to Freiman ideals that have been studied in \cite{FR}. In particular, we will show that for an $\mathfrak m$-primary equigenerated monomial ideal being Freiman is equivalent to being very good.

\section{Preliminaries and notation}
Throughout this paper we will work with $R=\mathbb K[x_1,\ldots,x_n]$, although all the results will also hold in the local ring $\mathbb K[[x_1,\ldots,x_n]]$. We will be dealing with monomial ideals $I$ in $R$. We start by listing a few basic properties of monomial ideals that will be used later.
\begin{enumerate}
\item
For each monomial ideal there is a unique minimal generating set consisting of monomials. For an ideal $I$ we denote $G(I)$ to be its minimal monomial generating set.
\item
If $m\in I=\langle m_1,\ldots,m_k\rangle$, where $m$ and all $m_i$ are monomials, then there is some $i$ such that $m_i$ divides $m$.
\item
If $I=\langle m_1,\ldots,m_k \rangle$, $J=\langle n_1,\ldots,n_l\rangle$, then 
$IJ=\langle m_1n_1,\ldots,m_1n_l, \ldots,\\ m_kn_1, \ldots, m_kn_l\rangle$, but this generating set is not minimal in general.
\item
There is a natural bijection between monomials in $\mathbb K[x_1,\ldots,x_n]$ and points in $\mathbb N^n$ in the following way: $x_1^{\alpha_1}x_2^{\alpha_2}\cdots x_n^{\alpha_n} \leftrightarrow (\alpha_1,\alpha_2,\ldots, \alpha_n)$. We will say that $(\beta_1,\beta_2,\ldots, \beta_n) \le (\alpha_1,\alpha_2,\ldots, \alpha_n)$ if $\beta_i\le \alpha_i$ for all $i\in \{1,2,\ldots, n\}$. Then it is clear that $x_1^{\beta_1}x_2^{\beta_2}\cdots x_n^{\beta_n}$ divides $x_1^{\alpha_1}x_2^{\alpha_2}\cdots x_n^{\alpha_n}$ if and only if $(\beta_1,\beta_2,\ldots, \beta_n) \le (\alpha_1,\alpha_2,\ldots, \alpha_n)$ and that multiplication of monomials corresponds to addition of points. We will often say that some monomial belongs to some subset of $\mathbb N^n$, meaning that the corresponding point belongs to that subset. Sometimes we will also say that some point belongs to some ideal $I$, meaning that the corresponding monomial belongs to $I$.
\item
$\langle m_1\rangle:\langle m_2\rangle=\left< \frac{m_1}{\gcd(m_1,m_2)}\right>$.
\item
$I:(J_1+J_2)=(I:J_1)\cap (I:J_2)$ and $(I_1+I_2):\langle m\rangle=I_1:\langle m\rangle+I_2:\langle m\rangle$.
\end{enumerate}
Let $I$ be an  $\mathfrak m$-primary monomial ideal of $R$, where $\mathfrak m=\langle x_1,x_2,\ldots,x_n\rangle$, that is, for some positive integers $d_1,\ldots,d_n$ we have $\{x_1^{d_1},\ldots,x_n^{d_n}\}\subseteq G(I)$. Henceforth, by $I$ we always mean an $\mathfrak m$-primary monomial ideal and denote $\mu_i:=x_i^{d_i}$, $1\le i\le n$. Also, in this paper we do not consider any polynomials other than monomials since it will always be sufficient to prove statements for monomials only.
\section{Good and bad ideals}
In this section we will introduce the notion of a good ideal, prove a necessary and a sufficient condition for being a good ideal and give some examples.
\begin{definition}
Let $I$ be an ideal. Recall that $\{\mu_1,\ldots,\mu_n\}\subseteq G(I)$, where $\mu_i=x_i^{d_i}$ for some $d_i$. Let $a_1,\ldots, a_n$ be nonnegative integers and denote
$$B_{a_1,\ldots,a_n}:=([a_1d_1,(a_1+1)d_1]\times\ldots\times[a_nd_n,(a_n+1)d_n])\cap \mathbb N^n.$$
$B_{a_1,\ldots,a_n}$ will be called the {\bf {box}} with coordinates $(a_1,\ldots,a_n)$, associated to $I$. Points of the type $(k_1d_1,\ldots,k_nd_n)$ and the corresponding monomials, where all $k_i$ are nonnegative integers, will be called {\bf {corners}}. We will mostly work with one ideal at a time, thus there is no need to use any additional index to show that $B_{a_1,\ldots,a_n}$ depends on $I$. Note that all minimal generators of $I$ lie in $B_{0,\ldots,0}$.
\end{definition}

\begin{definition}
We will say that an ideal $I$ satisfies the {\bf{box decomposition principle}} if the following holds: for every positive integer $l$, every minimal generator of $I^l$ belongs to some box $B_{a_1,\ldots,a_n}$ such that $a_1+\ldots+a_n=l-1.$ Ideals satisfying the box decomposition principle will be called {\bf {good}}, otherwise they will be called {\bf {bad}}.
\end{definition}
\begin{example}
Consider the ideal $I=\langle x^3,y^3,z^3, xyz\rangle$  in $\mathbb K[x,y,z]$. Then $x^2y^2z^2$ is a minimal generator of $I^2$, but it only belongs to $B_{0,0,0}$ and $0+0+0 \not= 1$. Therefore, $I$ is a bad ideal.
\end{example}
\begin{example}
Let $I=\langle x^3,y^3,z^3, x^2y^2z^2\rangle$  in $\mathbb K[x,y,z]$. Then $$G(I^2)=\{x^6,y^6,z^6,x^3y^3,x^3z^3,y^3z^3,x^5y^2z^2, x^2y^5z^2, x^2y^2z^5\}.$$ Note that the square of $x^2y^2z^2$ is not a minimal generator, thus we are not examining it. Below we list all the possible boxes with sums of coordinates equal to $1$ and minimal generators of $I^2$ that belong to these boxes:
$$B_{1,0,0}: x^6,x^3y^3,x^3z^3,x^5y^2z^2,$$
$$B_{0,1,0}: y^6,x^3y^3,y^3z^3,x^2y^5z^2,$$
$$B_{0,0,1}: z^6,x^3z^3,y^3z^3,x^2y^2z^5.$$
Note that each minimal generator of $I^2$ belongs to at least one such box. For simplicity, we denote $$S_{1,0,0}:=\{x^6,x^3y^3,x^3z^3,x^5y^2z^2\}$$ (minimal generators of $I^2$ that belong to $B_{1,0,0}$) and we similarly define $S_{0,1,0}$ and $S_{0,0,1}$. We see that elements in $S_{1,0,0}$ are multiples by $\mu_1=x^3$ of the minimal generators of $I$ (similarly for $S_{0,1,0}$ and $S_{0,0,1}$), that is, $$I^2=\langle S_{1,0,0}, S_{0,1,0}, S_{0,0,1}\rangle=\mu_1I+\mu_2I+\mu_3I.$$ Geometrically it means that $I^2$ is minimally generated by all appropriate translations of $I$.

What happens in $I^3$ and higher powers? It is easy to see that the situation is quite similar there as well. Say, for $I^3$ we take products of minimal generators of $I$ with minimal generators of $I^2$ (which are translations of the minimal generators of $I$). Obviously, we will get nothing but larger translations of $I$, that is, $I^3=\mu_1^2I+\mu_2^2I+\mu_3^2I+\mu_1\mu_2I+\mu_1\mu_3I+\mu_2\mu_3I$. The first summand corresponds to the minimal generators in $B_{2,0,0}$, the second one -- to those in $B_{0,2,0}$, the third one -- to those in $B_{0,0,2}$, the fourth one -- to those in $B_{1,1,0}$, the fifth one -- to those in $B_{1,0,1}$, the sixth one -- to those in $B_{0,1,1}$. Clearly, the pattern repeats in all powers of $I$: for every $l\ge 1$ we have
$$I^l=\sum_{l_1+\ldots+l_n=l-1}\mu_1^{l_1}\ldots \mu_n^{l_n}I.$$
Therefore, $I$ is a good ideal.
\end{example}
\begin{proposition}
\label{remeq}
The following are equivalent:
\begin{enumerate}[(1)]
\item I is a good ideal;
\item for any $l\ge 1$ and for any $m\in I^l$ there exist $a_1,\ldots, a_n$ such that $m\in B_{a_1,\ldots, a_n}$ and $a_1+\ldots+a_n\ge l-1$;
\item for any $l\ge 1$ and for any $m_1,\ldots, m_l\in G(I)$ there exist $a_1,\ldots, a_n$ such that \\ $m_1\cdots m_l\in B_{a_1,\ldots, a_n}$ and $a_1+\ldots+a_n\ge l-1$.
\end{enumerate}
\end{proposition}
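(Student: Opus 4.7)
The plan is to prove the cyclic chain (1) $\Rightarrow$ (2) $\Rightarrow$ (3) $\Rightarrow$ (1). The first two implications essentially unpack definitions. For (1) $\Rightarrow$ (2), I take any $m \in I^l$ and pick a minimal generator $m'$ of $I^l$ dividing it; by (1), $m'$ sits in a box $B_{a_1,\ldots,a_n}$ with $\sum a_i = l-1$, and so is divisible by $\mu_1^{a_1}\cdots\mu_n^{a_n}$. The same divisibility passes to $m$, which consequently lies in some box $B_{b_1,\ldots,b_n}$ with $b_i \ge a_i$ componentwise, yielding $\sum b_i \ge l-1$. The implication (2) $\Rightarrow$ (3) is immediate because $m_1\cdots m_l \in I^l$.

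The substantive step is (3) $\Rightarrow$ (1). Let $m$ be a minimal generator of $I^l$ and write $m = m_1\cdots m_l$ with $m_i \in G(I)$; by (3), $m \in B_{a_1,\ldots,a_n}$ with $\sum a_i \ge l-1$. If equality holds, we are done, so assume $\sum a_i \ge l$. Since $\mu_1^{a_1}\cdots\mu_n^{a_n}$ divides $m$, any choice of exponents $0 \le b_i \le a_i$ with $\sum b_i = l$ produces a divisor $\mu_1^{b_1}\cdots\mu_n^{b_n}$ of $m$ that already lies in $I^l$, and minimality of $m$ forces this divisor to equal $m$. Since such a choice with $b_i < a_i$ for some $i$ is available whenever $\sum a_i > l$, we must in fact have $\sum a_i = l$ and $m = \mu_1^{a_1}\cdots\mu_n^{a_n}$. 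With $m$ pinpointed at this corner and $\sum a_i = l \ge 1$, some $a_j \ge 1$, so $m$ also lies in the box $B_{a_1,\ldots,a_j-1,\ldots,a_n}$ (its $j$th exponent $a_j d_j$ equals the upper boundary of the shifted box), which has coordinate sum $l-1$, as required by (1).

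The main obstacle, I expect, is the corner subtlety in the last step: one must resist treating boxes as disjoint and instead exploit the fact that corner monomials lie in several adjacent boxes simultaneously, so that a minimal generator landing in a box with coordinate sum ``too large'' can be relocated to a neighboring box with the correct sum.
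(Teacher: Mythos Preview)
Your proof is correct and follows essentially the same cycle $(1)\Rightarrow(2)\Rightarrow(3)\Rightarrow(1)$ as the paper, with the same key ideas: passing from $m$ to a minimal generator dividing it in $(1)\Rightarrow(2)$, and in $(3)\Rightarrow(1)$ recognizing that a minimal generator in a box with coordinate sum $\ge l$ must be the corner $\mu_1^{a_1}\cdots\mu_n^{a_n}$ itself and then shifting to an adjacent box. The only cosmetic difference is that the paper splits the case $\sum a_i\ge l$ into ``$m$ equals the corner'' versus ``$m$ does not,'' while you argue directly that $\sum a_i>l$ is impossible (since $m=\mu_1^{b_1}\cdots\mu_n^{b_n}$ with some $b_j<a_j$ contradicts $\mu_j^{a_j}\mid m$); both routes arrive at the same corner-shifting conclusion.
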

\begin{proof}
\hfill
\begin{description}

\item [(1)$\Rightarrow $(2):]
Let $I$ be a good ideal and let $l\ge 1$ and $m\in I^l$. Then $m$ is divisible by some $m_1\in G(I^l)$ and $m_1\in B_{b_1,\ldots,b_n}$ for some $b_1,\ldots,b_n$ with $b_1+\ldots+b_n=l-1$. Then there exist $a_1,\dots,a_n$ such that $(a_1,\ldots,a_n)\ge(b_1,\ldots, b_n)$ (thus $a_1+\ldots+a_n\ge l-1$) and $m\in B_{a_1,\ldots, a_n}$. 
\item[(2)$\Rightarrow $(3):] Obvious.
\item[(3)$\Rightarrow $(1):] Let $l\ge 1$ and $m\in G(I^l)$. We want to show that there is a box $B_{b_1,\ldots,b_n}$ such that $m\in B_{b_1,\ldots,b_n}$ and $b_1+\ldots+b_n=l-1$. Since $m\in G(I^l)$, we have $m=m_1\cdots m_l$ for some $m_1,\ldots, m_l\in G(I)$. Then there exist $a_1,\ldots, a_n$ such that $m=m_1\cdots m_l\in B_{a_1,\ldots, a_n}$ and $a_1+\ldots+a_n\ge l-1$. If we assume that $a_1+\ldots+a_n\ge l$, then $m$ is divisible by $\mu_1^{a_1}\cdots\mu_n^{a_n}\in I^l$. We have two cases:
\begin{enumerate}
\item If $m\not=\mu_1^{a_1}\cdots\mu_n^{a_n}$, it contradicts $m\in G(I^l)$ and thus $a_1+\ldots+a_n=l-1$ and we can set $b_i:=a_i$ for all $i$. 
\item If $m=\mu_1^{a_1}\cdots\mu_n^{a_n}$, then $a_1+\ldots+a_n=l$ since $m$ can not possibly belong to $G(I^l)$  if $a_1+\ldots+a_n>l$. Note that we are not proving that $\mu_1^{a_1}\cdots\mu_n^{a_n}\in G(I^l)$ if $a_1+\ldots+a_n=l$ (this will be done later in this paper). In any case, we will find an appropriate box for $m$. At least one of $a_i$ is different from $0$. Without loss of generality, we can assume $a_1\ge 1$. Then $m\in B_{a_1-1,a_2,\ldots, a_n}$ and $(a_1-1)+a_2+\ldots+a_n=l-1$.
\end{enumerate}
\end{description}

\end{proof}

Now we are interested in some necessary and sufficient conditions on $G(I)$ for an ideal $I$ to be good. Note that every monomial ideal in $\mathbb K[x]$ is a good one.
\begin{theorem}{(A necessary condition for being a good ideal)} Let $I$ be an ideal in $\mathbb K[x_1,\ldots,x_n]$. If $I$ is a good ideal, then for any minimal generator $x_1^{\alpha_1}x_2^{\alpha_2}\cdots x_n^{\alpha_n}$ of $I$ the following holds:  $$\frac{\alpha_1}{d_1}+\cdots+\frac{\alpha_n}{d_n}\ge1.$$
\end{theorem}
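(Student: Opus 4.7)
The plan is to argue by contradiction using Proposition~\ref{remeq}(2) applied to the high powers $m^l$ of a hypothetical bad minimal generator $m=x_1^{\alpha_1}\cdots x_n^{\alpha_n}$.

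Suppose, for contradiction, that $c:=\frac{\alpha_1}{d_1}+\cdots+\frac{\alpha_n}{d_n}<1$. Fix an arbitrary positive integer $l$; since $m\in G(I)\subseteq I$, we have $m^l\in I^l$. By Proposition~\ref{remeq}(2) (or part (3) applied to $m,\ldots,m$), there exist nonnegative integers $a_1,\ldots,a_n$ with $m^l\in B_{a_1,\ldots,a_n}$ and $a_1+\cdots+a_n\ge l-1$. The condition $m^l\in B_{a_1,\ldots,a_n}$ means precisely that $a_id_i\le l\alpha_i\le (a_i+1)d_i$ for every $i$, and in particular $a_i\le l\alpha_i/d_i$.

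Summing these inequalities over $i$ yields
$$l-1\;\le\;\sum_{i=1}^n a_i\;\le\;\sum_{i=1}^n \frac{l\alpha_i}{d_i}\;=\;lc,$$
so $c\ge 1-\tfrac{1}{l}$. Since $l$ was arbitrary, letting $l\to\infty$ (or simply choosing any $l>1/(1-c)$) gives $c\ge 1$, contradicting our assumption $c<1$. Hence every minimal generator must satisfy $\sum \alpha_i/d_i\ge 1$.

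The argument is essentially just bookkeeping once one uses the key observation that the box coordinates of a pure power $m^l$ are directly controlled by the exponents of $m$ divided by the $d_i$; the only mild subtlety is noticing that taking $l$ sufficiently large converts the strict inequality $c<1$ into a contradiction with the lower bound $l-1$ on $\sum a_i$. I do not anticipate any real obstacle here, since Proposition~\ref{remeq} is already available and the box containment translates immediately into coordinate-wise inequalities.
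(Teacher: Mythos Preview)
Your proof is correct and follows essentially the same approach as the paper: both argue by contradiction, consider the power $m^l$ of a hypothetical bad generator, use the inequality $a_i\le l\alpha_i/d_i$ for the box coordinates, and invoke Proposition~\ref{remeq} to reach a contradiction for $l$ large enough. The only cosmetic difference is that the paper fixes $l>1/(1-c)$ at the outset, whereas you derive $c\ge 1-1/l$ for all $l$ and then let $l\to\infty$ (or make the same choice of $l$).
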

\begin{proof} Assume that there is a minimal generator for which the above condition fails, that is, $m=x_1^{\alpha_1}x_2^{\alpha_2}\cdots x_n^{\alpha_n}$ with $\frac{\alpha_1}{d_1}+\cdots+\frac{\alpha_n}{d_n}=1-\epsilon$ for some $0<\epsilon<1$. Let $l$ be a positive integer such that $l>\frac{1}{\epsilon}$. We will show that the box decomposition principle fails for $I^l$.
Consider $m^l=x_1^{l\alpha_1}x_2^{l\alpha_2}\cdots x_n^{l\alpha_n}$. The first coordinate of $m^l$ is $l\alpha_1$, therefore, the first coordinate of the box where $m^l$ belongs is at most $\lfloor\frac{l\alpha_1}{d_1}\rfloor$ and similar inequalities hold for the other coordinates. Therefore, the sum of coordinates of any box containing $m^l$ is less or equal to $\lfloor\frac{l\alpha_1}{d_1}\rfloor+\ldots+\lfloor\frac{l\alpha_n}{d_n}\rfloor\le\frac{l\alpha_1}{d_1}+\ldots+\frac{l\alpha_n}{d_n}=l(\frac{\alpha_1}{d_1}+\cdots+\frac{\alpha_n}{d_n})=l(1-\epsilon)<l(1-\frac{1}{l})=l-1.$
Thus all boxes containing $m^l$ have the sum of coordinates strictly less than $l-1$ and we are done by Proposition~\ref{remeq}.
\end{proof}

\begin{theorem}{(A sufficient condition for being a good ideal)}

Let $I$ be an ideal in $\mathbb K[x_1,\ldots,x_n]$. Assume that for any minimal generator \\ $x_1^{\alpha_1}x_2^{\alpha_2}\cdots x_n^{\alpha_n}$ of $I$ which is not a corner the following holds:
$$\frac{\alpha_1}{d_1}+\cdots+\frac{\alpha_n}{d_n}\ge \frac{n}{2}.$$
Then $I$ is a good ideal.
\end{theorem}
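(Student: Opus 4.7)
My plan is to verify condition (3) of Proposition~\ref{remeq}: for every $l \ge 1$ and every choice of minimal generators $m_1,\ldots,m_l \in G(I)$, I want $(a_1,\ldots,a_n)$ with $m_1\cdots m_l \in B_{a_1,\ldots,a_n}$ and $a_1+\ldots+a_n \ge l-1$. Writing $m_j = x_1^{\alpha_{j,1}}\cdots x_n^{\alpha_{j,n}}$ and $\beta_i := \sum_{j=1}^{l} \alpha_{j,i}$, the defining inequalities of $B_{a_1,\ldots,a_n}$ force $a_i \le \beta_i/d_i$, and the choice $a_i = \lfloor \beta_i/d_i\rfloor$ is always admissible. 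Everything thus reduces to proving
\[
\sum_{i=1}^n \left\lfloor \frac{\beta_i}{d_i} \right\rfloor \;\ge\; l-1.
\]

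The crucial step is to separate the \emph{corner} generators $\mu_i = x_i^{d_i}$ from the rest. Because each $\mu_i$ already lies in $G(I)$, any non-corner $m_j \in G(I)$ must satisfy $\alpha_{j,i} < d_i$ for every $i$. Letting $s_i$ count occurrences of $\mu_i$ among the $m_j$'s, setting $s := s_1+\ldots+s_n$, $t := l-s$, and $\beta_i' := \sum_{j \text{ non-corner}} \alpha_{j,i}$, the integer-shift identity $\lfloor \beta_i/d_i\rfloor = s_i + \lfloor \beta_i'/d_i\rfloor$ collapses the target to
\[
\sum_{i=1}^n \left\lfloor \frac{\beta_i'}{d_i} \right\rfloor \;\ge\; t-1.
\]

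For $t \in \{0,1\}$ every $\beta_i'/d_i < 1$ and all floors vanish, so the inequality is trivial. For $t \ge 2$ I would invoke the hypothesis: summing $\sum_i \alpha_{j,i}/d_i \ge n/2$ over the $t$ non-corner generators gives $\sum_i \beta_i'/d_i \ge tn/2$. Combining with the strict bound $\sum_i \lfloor \beta_i'/d_i\rfloor > \sum_i \beta_i'/d_i - n$ (which holds because each fractional part lies in $[0,1)$) yields
\[
\sum_{i=1}^n \left\lfloor \frac{\beta_i'}{d_i} \right\rfloor \;>\; \frac{n(t-2)}{2} \;\ge\; t-2,
\]
where the last step uses $n \ge 2$ together with $t \ge 2$. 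Since the left-hand side and $t-2$ are both integers, a strict inequality between them upgrades to $\sum_i \lfloor \beta_i'/d_i\rfloor \ge t-1$.

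The only delicate point is spotting the right reduction: applied crudely to all $l$ generators the $n/2$ bound is too weak, since the corners only contribute $1$ each. Peeling them off makes the surplus $tn/2 - t = t(n-2)/2$ exactly absorb the loss of $n$ coming from the $n$ floor functions, and the border case $n = 2$ passes on integrality alone.
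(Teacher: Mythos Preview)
Your proof is correct, and it follows a genuinely different route from the paper's.

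The paper argues inductively: it shows that whenever two monomials $m_1,m_2$ each satisfy $\sum_i \alpha_i/d_i \ge n/2$, their product can be written as $\mu_i \cdot m'$ for some $i$, where $m'$ again satisfies $\sum_i \gamma_i/d_i \ge n/2$. Iterating this peeling step (and handling corners by factoring them out trivially), any product $m_1\cdots m_l$ becomes $\mu_{i_1}\cdots\mu_{i_{l-1}}$ times a monomial still satisfying the $n/2$ bound, which places the product in a box with coordinate sum at least $l-1$. In contrast, you work directly with the floor-function target $\sum_i \lfloor \beta_i/d_i\rfloor \ge l-1$: you strip off the corners via the integer-shift identity and then bound the remaining sum of floors from below using $\lfloor x\rfloor > x-1$ together with the aggregate inequality $\sum_i \beta_i'/d_i \ge tn/2$. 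The paper's argument is shorter and highlights an invariant that survives each step; yours is non-inductive and makes the arithmetic completely explicit, in particular showing exactly where the strict inequality and the integrality of both sides are needed in the borderline case $n=2$. One small remark: you invoke $n\ge 2$ in the final step without comment on $n=1$, but this is harmless since for $n=1$ the only minimal generator is the corner $x^{d_1}$, so $t=0$ always and the case $t\ge 2$ never occurs.
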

\begin{proof}
The claim is trivial for $n=1$, thus assume $n\ge 2$.
Let $m_1,m_2\in G(I)$, where $m_1=x_1^{\alpha_1}\cdots x_n^{\alpha_n}$, $m_2=x_1^{\beta_1}\cdots x_n^{\beta_n}$ with $\frac{\alpha_1}{d_1}+\cdots+\frac{\alpha_n}{d_n}\ge \frac{n}{2}$ 
and $\frac{\beta_1}{d_1}+\cdots+\frac{\beta_n}{d_n}\ge \frac{n}{2}$. 
By Proposition~\ref{remeq}, it suffices to show that $m_1m_2=\mu_ix_1^{\gamma_1}\cdots x_n^{\gamma_n}$ for some $i$ and with $\frac{\gamma_1}{d_1}+\cdots+\frac{\gamma_n}{d_n}\ge \frac{n}{2}$. Note that $\frac{\alpha_1+\beta_1}{d_1}+\cdots+\frac{\alpha_n+\beta_n}{d_n}\ge n$, thus we must have $\frac{\alpha_i+\beta_i}{d_i}\ge 1$ for some $i$. We can assume $i=1$, then $\frac{\alpha_1+\beta_1-d_1}{d_1}+\cdots+\frac{\alpha_n+\beta_n}{d_n}\ge n-1\ge \frac{n}{2}$. Setting $\gamma_1=\alpha_1+\beta_1-d_1$ and $\gamma_i=\alpha_i+\beta_i$ for $2\le i \le n$ finishes the proof.

\end{proof}
\begin{remark}
For $n=2$ the necessary condition is equivalent to the sufficient condition.
\end{remark}
\begin{example}(A good ideal that does not satisfy the sufficient condition)

Let $I=\langle \mu_1,\mu_2,\mu_3,m\rangle=\langle x^5,y^5,z^5,xyz^4\rangle\subset \mathbb K[x,y,z]$. The ideal satisfies the necessary condition, but not the sufficient condition, so we will explore it by hand. What kinds of generators do we have in $I^l$? First of all, we notice that $m^5=x^5y^5z^{20}$ is divisible by, say, $\mu_1\mu_2\mu_3^3\in I^5$, therefore, it is not a minimal generator of $I^5$. Therefore, for any $l$, the minimal generators of $I^l$ will be of the form $\mu_1^{k_1}\mu_2^{k_2}\mu_3^{k_3}m^k$, where $k_1+k_2+k_3+k=l$ and $k\le 4$. If $k=0$, the monomial is just a corner and this case is trivial, so let $k\ge 1$. Clearly, such a monomial belongs to a box whose sum of coordinates is $l-1$ if and only if $m^k$ belongs to a box whose sum of coordinates is $k-1$. So the only thing we need to check is whether $m^k$ belongs to a box whose sum of coordinates is $k-1$, $2\le k \le 4$ (this is always true for $k=1$). We see that $m^2=x^2y^2z^8\in B_{0,0,1}$, $m^3=x^3y^3z^{12}\in B_{0,0,2}$, $m^4=x^4y^4z^{16}\in B_{0,0,3}$. Therefore, $I$ is a good ideal.
\end{example}
\begin{example}(A bad ideal that satisfies the necessary condition)

Let $I=\langle x^5,y^5,z^5,x^2y^2z^2\rangle\subset \mathbb K[x,y,z]$. The ideal satisfies the necessary condition, but not the sufficient condition. We see that $x^4y^4z^4$ is a minimal generator of $I^2$ and it only belongs to $B_{0,0,0}$. Since $0+0+0 \not= 1$, $I$ is a bad ideal.
\end{example}
Ideals that satisfy the necessary condition, but do not necessarily satisfy the sufficient condition, will be discussed further in Section~9 of this paper. There is a way to detect whether an ideal is good or bad and it basically uses the ideas from the two examples above.
\section{Ideals inside boxes and their connection to each other}
We will start this section with an example aimed to give a motivation for the future constructions.
\begin{example}
\label{ex}
Let $I=\langle x^5,y^5,xy^4,x^4y\rangle\subset \mathbb K[x,y]$. $I$ is a good ideal since it satisfies the sufficient condition. In this case the associated boxes have sizes $5\times 5$. Figure~1 represents powers of $I$ up to $I^4$.
Consider the box $B_{1,0}$. Inside this box we see some of the minimal generators of $I^2$, namely, $\{x^5y^5,x^6y^4,x^8y^2,x^9y,x^{10}\}$.
Since they are in $B_{1,0}$, they are divisible by $\mu_1^1\mu_2^0=x^5$. 
If we divide all these monomials by $\mu_1^1\mu_2^0$, we will get $\{y^5,xy^4,x^3y^2,x^4y,x^5\}$.

Define $I_{1,0}:=\langle y^5,xy^4,x^3y^2,x^4y,x^5\rangle$. Geometrically, this means viewing monomials in $B_{1,0}$ as if the lower left corner of $B_{1,0}$ was the origin. In this particular example we have $I_{0,0}=I$, $I_{1,0}=\langle y^5,xy^4,x^3y^2,x^4y,x^5\rangle$, $I_{0,1}=\langle y^5,xy^4,x^2y^3,x^4y,x^5\rangle$, $I_{a,b}=\langle y^5,xy^4,x^2y^3,x^3y^2,x^4y,x^5\rangle$ for all other $(a,b)$. 

\begin{figure}[H]
\centering
\begin{tikzpicture}[scale=0.4]
\draw[step=1cm,gray,opacity=0.3,line width=0.001 mm] (0,0) grid (20,20);

\draw[step=5cm,opacity=0.3] (0,0) grid (20,20);

\draw[violet, thick] (0,5)--(1,5)--(1,4)--(4,4)--(4,1)--(5,1)--(5,0);
\draw[violet, thick] (0,10)--(1,10)--(1,9)--(2,9)--(2,8)--(4,8)--(4,6)--(5,6)--(5,5)--(6,5)--(6,4)--(8,4)--(8,2)--(9,2)--(9,1)--(10,1)--(10,0);
\foreach \x in {0,...,14}{
\draw[violet, thick] (\x,15-\x)--(\x+1,15-\x)--(\x+1,14-\x);
}
\foreach \x in {0,...,19}{
\draw[violet, thick] (\x,20-\x)--(\x+1,20-\x)--(\x+1,19-\x);
}
\foreach \x in {0,...,4}{
    \node [below, very thin] at (5*\x,0) {$_\x$};
    \node [left, very thin] at (0,5*\x) {$_\x$};
    }
\end{tikzpicture}

\caption{powers of $I$: $I$, $I^2$, $I^3$ and $I^4$} 
\end{figure}
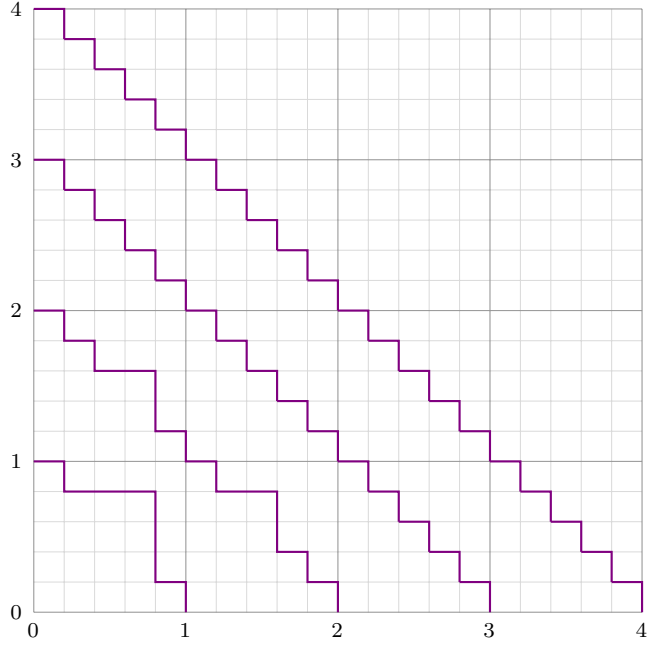

\end{example}

This gives rise to a more general definition.

\begin{definition}
Let $I$ be a good ideal and $a_1,\ldots, a_n$ nonnegative integers. We define $$I_{a_1,\ldots,a_n}:=\left\langle{\frac{m}{\mu_1^{a_1}\cdots\mu_n^{a_n}}\mid m \in B_{a_1,\ldots, a_n} \cap G(I^l) }\right\rangle,$$
where $l=a_1+\ldots+a_n+1$. Note that this a minimal generating set of $I_{a_1,\ldots,a_n}$.
\end{definition}

A priori it is not clear why, given a good ideal $I$, any box $B_{a_1,\ldots,a_n}$ has a nonempty intersection with $G(I^l)$, where $l=a_1+\ldots+a_n+1$. The next remark will in particular show that intersections of this type are never empty.

\begin{remark}{(Corners are needed)}
\label{rem}
Let $I$ be a good ideal, let $m=\mu_1^{k_1}\cdots\mu_n^{k_n}$ be some corner and put $l:=k_1+k_2+\ldots+k_n$. Then $m \in G(I^l).$ Indeed, assume that $m$ is not a minimal generator of $I^l$, which means that there exists a strictly smaller generator $s=x_1^{s_1}\cdots x_n^{s_n}$. Note that $s$ is a product of $l$ minimal generators of $I$, and since $I$ is a good ideal, the necessary condition holds and therefore $\frac{s_1}{d_1}+\ldots+\frac{s_n}{d_n}\ge l$. As for $m$, we have $\frac{k_1d_1}{d_1}+\ldots+\frac{k_nd_n}{d_n}=k_1+\ldots+k_n=l$, which is a contradiction, since $s$ strictly divides $m$.

Now we see that, given a good ideal $I$ and a box $B_{a_1,\ldots, a_n}$, the box necessarily contains, for instance, all monomials of the type 
$\left\{\mu_j\prod_{i=1}^{n}\mu_i^{a_i}\mid 1\le j\le n
\right\}$. All these monomials are corners, therefore, they are minimal generators of $I^l$, where $l=a_1+\ldots+a_n+1$. Thus we conclude that any box $B_{a_1,\ldots, a_n}$ has a nonempty intersection with $G(I^l)$, since this intersection contains $n$ corners, mentioned above. As a consequence, $\mu_1,\ldots,\mu_n$ are minimal generators of any $I_{a_1,\ldots, a_n}$.
\end{remark}
\begin{proposition}
\label{prop1}
Let $I$ be a good ideal and $a_1,\ldots,a_n$ nonnegative integers. Then $$I_{a_1,\ldots, a_n}=I^l:\langle\mu_1^{a_1}\cdots \mu_n^{a_n}\rangle,$$ where $l=a_1+\ldots+a_n+1$.
\end{proposition}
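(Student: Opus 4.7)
The plan is to prove both inclusions separately. The containment $I_{a_1,\ldots,a_n}\subseteq I^l:\langle\mu_1^{a_1}\cdots\mu_n^{a_n}\rangle$ will be immediate from the definition: every generator of $I_{a_1,\ldots,a_n}$ has the form $m/(\mu_1^{a_1}\cdots\mu_n^{a_n})$ for some $m\in B_{a_1,\ldots,a_n}\cap G(I^l)$, and since $m$ lies in that box, $\mu_1^{a_1}\cdots\mu_n^{a_n}$ divides $m$, so multiplying back returns $m\in I^l$. That dispatches one direction in a single line.

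The substance of the argument is the reverse inclusion. Since both sides are monomial ideals, I will test it on a single monomial $n=x_1^{n_1}\cdots x_n^{n_n}$ satisfying $n\mu_1^{a_1}\cdots\mu_n^{a_n}\in I^l$, and I need to conclude $n\in I_{a_1,\ldots,a_n}$. My plan is to split on whether $n$ already sits inside the base box $B_{0,\ldots,0}$.

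If $n_i\ge d_i$ for some $i$, then $\mu_i\mid n$, and Remark~\ref{rem} tells us $\mu_i\in I_{a_1,\ldots,a_n}$, so $n\in I_{a_1,\ldots,a_n}$ at once. In the remaining case $n_i<d_i$ for every $i$, I will pick any $m'=x_1^{e_1}\cdots x_n^{e_n}\in G(I^l)$ dividing $n\mu_1^{a_1}\cdots\mu_n^{a_n}$, so that $e_i\le n_i+a_id_i<(a_i+1)d_i$ for all $i$. Invoking the box decomposition principle, $m'$ lies in some $B_{b_1,\ldots,b_n}$ with $b_1+\cdots+b_n=l-1=a_1+\cdots+a_n$, and the estimate $b_id_i\le e_i<(a_i+1)d_i$ forces $b_i\le a_i$ for every $i$; the equal coordinate sums then upgrade this to $b_i=a_i$ for all $i$. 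Consequently $m'\in B_{a_1,\ldots,a_n}\cap G(I^l)$, the quotient $m'/(\mu_1^{a_1}\cdots\mu_n^{a_n})$ is by definition a generator of $I_{a_1,\ldots,a_n}$, and it divides $n$ by construction.

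The only real subtlety I expect is recognizing that the coordinate-wise bound $b_i\le a_i$ needs $n_i<d_i$; without first shaving off the $\mu_i$-factors one loses the strict upper bound $e_i<(a_i+1)d_i$ and the equal-sums trick breaks. Remark~\ref{rem} is what makes that reduction costless, and it is the only ingredient beyond the box decomposition principle itself that the argument requires.
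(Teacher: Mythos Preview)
Your proof is correct and follows essentially the same approach as the paper's: both directions use the box decomposition principle together with Remark~\ref{rem} (that each $\mu_i$ lies in $I_{a_1,\ldots,a_n}$). The only difference is organizational: the paper first picks a minimal generator $g\in G(I^l)$ dividing $m\mu_1^{a_1}\cdots\mu_n^{a_n}$, locates it in some box $B_{b_1,\ldots,b_n}$, and then splits on whether $(b_1,\ldots,b_n)=(a_1,\ldots,a_n)$ (if not, some $b_i>a_i$ forces $\mu_i\mid m$), whereas you split earlier on whether the test monomial already has an exponent $\ge d_i$; your reduction to the case $n_i<d_i$ for all $i$ then forces the box of the minimal generator to equal $(a_1,\ldots,a_n)$ on the nose. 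These are two orderings of the same dichotomy. One cosmetic remark: you use $n$ both for the number of variables and for the test monomial, which is a notational clash worth avoiding.
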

\begin{proof} It is clear from the definition that $I_{a_1,\ldots, a_n}\subseteq I^l:\langle\mu_1^{a_1}\cdots \mu_n^{a_n}\rangle$. For the other inclusion, let $m\in I^l:\langle\mu_1^{a_1}\cdots \mu_n^{a_n}\rangle$. Then $m\mu_1^{a_1}\cdots \mu_n^{a_n}\in I^l$, that is, $m\mu_1^{a_1}\cdots \mu_n^{a_n}$ is a multiple of some $g\in G(I^l)$, say, $m\mu_1^{a_1}\cdots \mu_n^{a_n}=gg_1$. Being a minimal generator of $I^l$, $g$ belongs to some box, say, $B_{b_1,\ldots,b_n}$, with $b_1+\ldots+b_n=l-1=a_1+\ldots+a_n$. If $(a_1,\ldots,a_n)=(b_1,\ldots,b_n)$, then $m$ is a multiple of $\frac{g}{\mu_1^{a_1}\cdots\mu_n^{a_n}}$, which is a generator of $I_{a_1,\ldots, a_n}$ and thus we are done. If $(a_1,\ldots,a_n)\not=(b_1,\ldots,b_n)$, then there is some $a_i<b_i$. Without loss of generality, we assume that $a_1<b_1$. Then the right hand side of $m\mu_1^{a_1}\cdots \mu_n^{a_n}=gg_1$ is divisible by $\mu_1^{b_1}$, thus $m$ is divisible by $\mu_1$, and $\mu_1$ is a minimal generator of $I_{a_1,\ldots, a_n}$ by Remark~\ref{rem}. Therefore, $m\in I_{a_1,\ldots, a_n}$.
\end{proof}
Let $a_1,a_2,\ldots,a_n$ and $b_1,b_2,\ldots, b_n$ be nonnegative integers such that \\ $(a_1,\ldots,a_n)\le(b_1,\ldots,b_n)$. Since $I_{a_1,\ldots, a_n}=I^{a_1+\ldots+a_n+1}:\langle\mu_1^{a_1}\cdots \mu_n^{a_n}\rangle$ and $I_{b_1,\ldots, b_n}=I^{b_1+\ldots+b_n+1}:\langle\mu_1^{b_1}\cdots \mu_n^{b_n}\rangle$, we immediately conclude the following:

\begin{corollary}
\label{cor}
Let $I$ be a good ideal and let $a_1,a_2,\ldots,a_n$ and $b_1,b_2,\ldots, b_n$ be nonnegative integers such that $(a_1,\ldots,a_n)\le(b_1,\ldots,b_n)$. Then $I_{a_1,\ldots, a_n}\subseteq I_{b_1,\ldots, b_n}$.
\end{corollary}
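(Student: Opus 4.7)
The plan is to invoke the colon-ideal characterization of $I_{a_1,\ldots,a_n}$ established in Proposition~\ref{prop1} and then use a direct multiplication argument, so essentially nothing beyond bookkeeping is required.

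Set $l:=a_1+\ldots+a_n+1$ and $l':=b_1+\ldots+b_n+1$, and write $c_i:=b_i-a_i\ge 0$, so that $l'-l=c_1+\ldots+c_n$. By Proposition~\ref{prop1} we have
$$I_{a_1,\ldots,a_n}=I^l:\langle\mu_1^{a_1}\cdots\mu_n^{a_n}\rangle\quad\text{and}\quad I_{b_1,\ldots,b_n}=I^{l'}:\langle\mu_1^{b_1}\cdots\mu_n^{b_n}\rangle,$$
so it suffices to take an arbitrary monomial $m\in I_{a_1,\ldots,a_n}$ and show that $m\mu_1^{b_1}\cdots\mu_n^{b_n}\in I^{l'}$.

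From $m\in I^l:\langle\mu_1^{a_1}\cdots\mu_n^{a_n}\rangle$ we get $m\mu_1^{a_1}\cdots\mu_n^{a_n}\in I^l$. Now each $\mu_i=x_i^{d_i}$ is a minimal generator of $I$, hence $\mu_i^{c_i}\in I^{c_i}$, and therefore $\mu_1^{c_1}\cdots\mu_n^{c_n}\in I^{c_1+\ldots+c_n}=I^{l'-l}$. Multiplying the containment $m\mu_1^{a_1}\cdots\mu_n^{a_n}\in I^l$ by this element gives
$$m\mu_1^{b_1}\cdots\mu_n^{b_n}=\bigl(m\mu_1^{a_1}\cdots\mu_n^{a_n}\bigr)\bigl(\mu_1^{c_1}\cdots\mu_n^{c_n}\bigr)\in I^l\cdot I^{l'-l}\subseteq I^{l'}.$$
Hence $m\in I^{l'}:\langle\mu_1^{b_1}\cdots\mu_n^{b_n}\rangle=I_{b_1,\ldots,b_n}$, completing the proof.

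There is no genuine obstacle here; the only subtlety worth flagging is making sure one passes through the colon-ideal reformulation rather than trying to manipulate the generating set in the definition of $I_{a_1,\ldots,a_n}$ directly, since the latter would force one to track which minimal generators of $I^l$ versus $I^{l'}$ live in which boxes. Using Proposition~\ref{prop1} bypasses this entirely.
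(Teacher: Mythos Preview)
Your proof is correct and follows exactly the approach the paper takes: the paper also invokes Proposition~\ref{prop1} and then declares the corollary ``immediate'' without spelling out the multiplication step. You have simply written out the one-line verification that $I^l:\langle\mu_1^{a_1}\cdots\mu_n^{a_n}\rangle\subseteq I^{l'}:\langle\mu_1^{b_1}\cdots\mu_n^{b_n}\rangle$ via $\mu_1^{c_1}\cdots\mu_n^{c_n}\in I^{l'-l}$, which is precisely what the paper leaves to the reader.
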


\section{Asymptotic behaviour of $I_{a_1,\ldots, a_n}$}
Now we know that $I_{a_1,\ldots,a_n}$ grows as $(a_1,\ldots,a_n)$ grows. Since  $I_{a_1,\ldots,a_n}$ can not increase forever, one expects some pattern on high powers of $I$, which is indeed the case. Let us take a closer look at the situation.
\begin{definition}
Let $a_1,\ldots,a_n$ be nonnegative integers. We will use the following notation: 
\begin{multline*}
C_{\underline{a_1},\underline{a_2},\ldots,\underline{a_k},a_{k+1},a_{k+2},\ldots,a_n}:=\{(b_1,\ldots,b_n)\in\mathbb N^n \mid \\b_1=a_1,\ldots,b_k=a_k, b_{k+1}\ge a_{k+1},\ldots,b_{n}\ge a_{n}\}.
\end{multline*}
We will use a similar notation for any configuration of fixed and non-fixed coordinates. Sets of this type will be called {\bf {cones}}, for any cone the number of non-fixed coordinates will be called its {\bf dimension} and $(a_1,\ldots,a_n)$ will be called its {\bf vertex}. Note that $\mathbb N^n=C_{0,0,\ldots,0}$.
\end{definition}
\begin{example}
Let $n=3$ and $a_1=1,a_2=2,a_3=3$. Then $C_{1,\underline{2},3}=\{(b_1,2,b_3)\mid b_1\ge 1,b_3\ge3\}$ and the dimension of this cone is $2$. 
\end{example}
\begin{definition}
Let $a_1,\ldots,a_n$ be nonnegative integers. By $A_{a_1,\ldots,a_n}$ we denote the set of all cones that satisfy the following conditions:
\begin{enumerate}

\item 
if $(b_1,\ldots,b_n)$ 
is the vertex of a cone in $A_{a_1,\ldots,a_n}$, then $b_i\le a_i$ for all $1\le i\le n$;
\item for all $1\le i\le n$ the following holds: if $b_i=a_i$, then $b_i$ is not underlined and if $b_i<a_i$, then $b_i$ is underlined.
\end{enumerate}
Note that the unique cone of dimension $n$ in $A_{a_1,\ldots,a_n}$ is $C_{a_1,\ldots,a_n}$.

\end{definition}

\begin{example}
\label{ex2}
Let $n=2$, $a_1=2$, $a_2=1$. We would like to find all the cones in $A_{2,1}$. For any cone in $A_{2,1}$ the first coordinate of its vertex can only be chosen from the set $\{0,1,2\}$; we underline it if we choose 0 or 1 and do not underline it if we choose 2. Independently, the second coordinate can only be chosen from the set $\{0,1\}$ and we underline it if we choose 0 and do not underline it if we choose 1. Therefore, we will get six cones in total:

$A_{2,1}=\{\textcolor{cyan}{C_{\underline{0},\underline{0}}},\textcolor{green}{C_{\underline{0},1}},\textcolor{magenta}{C_{\underline{1},\underline{0}}},\textcolor{blue}{C_{\underline{1},1}},\textcolor{orange}{C_{2,\underline{0}}},\textcolor{red}{C_{2,1}}\}$.

\begin{figure}[H]
\centering
\begin{tikzpicture}[scale=0.7]
	\draw[green,very thick]   (0,1) -- (0,7);
	\draw[blue,very thick]    (1,1) -- (1,7);
	\draw[orange,very thick]  (2,0) -- (7,0);
	\draw[red,very thick]     (2,1) -- (7,1);
	\draw[red,very thick]     (2,1) -- (2,7);
	\fill[fill=magenta] (1,0) circle (0.08 cm);
	\fill[fill=cyan] (0,0) circle (0.08 cm);
	\foreach \x in {2,...,7}{

    \fill[fill=orange] (\x,0) circle (0.08 cm);
    }
    \foreach \x in {1,...,7}{
    \fill[fill=green] (0,\x) circle (0.08 cm);
    }
    \foreach \x in {1,...,7}{
    \fill[fill=blue] (1,\x) circle (0.08 cm);
    }
    \foreach \x in {2,...,7}{
    \foreach \y in {1,...,7}{
    \fill[fill=red] (\x,\y) circle (0.08 cm);
    }}
    \foreach \x in {0,...,7}{
    \node [below, very thin] at (\x,0) {$_\x$};
    \node [left, very thin] at (0,\x) {$_\x$};
    }

\end{tikzpicture}
\caption{cones of $A_{2,1}$}
\end{figure}
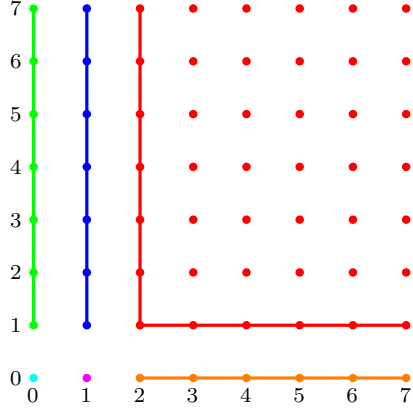

Figure~2 represents the six cones from $A_{2,1}$. The boundary lines are only drawn for better visibility. Clearly, the number of boundary lines equals the dimension of the cone.
\end{example}
\begin{lemma}
Let $a_1,\ldots,a_n$ be nonnegative integers. Then cones in $A_{a_1,\ldots,a_n}$ form a disjoint covering of $\mathbb N^n$.
\end{lemma}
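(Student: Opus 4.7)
The plan is to show, for any point $(c_1,\ldots,c_n) \in \mathbb{N}^n$, that there is exactly one cone in $A_{a_1,\ldots,a_n}$ containing it; this simultaneously gives existence (covering) and uniqueness (disjointness). The construction is explicit and coordinate-wise, so the proof will essentially be a case analysis on each coordinate.

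For existence, given $(c_1,\ldots,c_n)$, I would build a candidate vertex $(b_1,\ldots,b_n)$ as follows: for each $i$, put $b_i := a_i$ (not underlined) if $c_i \ge a_i$, and put $b_i := c_i$ (underlined) if $c_i < a_i$. By construction $b_i \le a_i$ for all $i$, and the underlining pattern matches the rule from the definition of $A_{a_1,\ldots,a_n}$: underlined precisely when $b_i < a_i$. It remains to check that $(c_1,\ldots,c_n)$ lies in the corresponding cone. In the non-underlined coordinates we have $b_i = a_i$, and the cone requires $c_i \ge a_i$, which holds by the choice of the case. In the underlined coordinates we have $b_i = c_i$, and the cone requires $c_i = b_i$, which is immediate. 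Hence $(c_1,\ldots,c_n)$ belongs to a cone in $A_{a_1,\ldots,a_n}$.

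For disjointness (equivalently, uniqueness of the cone), suppose $(c_1,\ldots,c_n)$ lies in a cone of $A_{a_1,\ldots,a_n}$ with vertex $(b_1,\ldots,b_n)$. I would argue coordinate by coordinate that $b_i$ is forced by $c_i$. If $b_i$ is not underlined, then $b_i = a_i$ and the cone condition forces $c_i \ge a_i$. If $b_i$ is underlined, then $b_i < a_i$ and the cone condition forces $c_i = b_i$, so $c_i < a_i$. Thus the dichotomy $c_i \ge a_i$ versus $c_i < a_i$ determines the underlining status of $b_i$, and in turn determines $b_i$ itself (either $a_i$ or $c_i$). Therefore any two vertices of cones containing $(c_1,\ldots,c_n)$ must coincide, so two distinct cones of $A_{a_1,\ldots,a_n}$ cannot share a point.

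The only mildly delicate aspect is bookkeeping the underline convention consistently with the compatibility condition in the definition of $A_{a_1,\ldots,a_n}$; once that is pinned down (as illustrated by Example~\ref{ex2}), each direction reduces to a one-line case split per coordinate, so I do not anticipate a real obstacle. The takeaway is a clean ``projection'' map $(c_1,\ldots,c_n)\mapsto(b_1,\ldots,b_n)$ with $b_i=\min(c_i,a_i)$, which will also be convenient when this lemma is used later.
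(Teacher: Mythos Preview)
Your proof is correct and follows essentially the same approach as the paper: both argue coordinate-wise, showing that the comparison $c_i \ge a_i$ versus $c_i < a_i$ determines uniquely whether the $i$th coordinate of the containing cone is non-fixed (equal to $a_i$) or fixed (equal to $c_i$). Your write-up separates existence and uniqueness a bit more cleanly and adds the useful observation that the vertex is given by $b_i = \min(c_i,a_i)$, but the argument is the same.
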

\begin{proof}
Let $\textbf{b}=(b_1,\ldots,b_n)$ be a point in $\mathbb N^n$. We will find a unique cone in $A_{a_1,\ldots,a_n}$ that contains this point. First of all, we compare $a_1$ and $b_1$.
\begin{enumerate}
\item If $b_1\ge a_1$, then the first coordinate of our future cone containing $\textbf{b}$ is non-fixed since otherwise $A_{a_1,\ldots,a_n}$ contains $C_{\underline{b_1},\ldots}$, which is a contradiction with $b_1\ge a_1$. Therefore, our first coordinate has to be non-fixed, hence equal to $a_1$.
\item If $b_1<a_1$, then the first coordinate can only be a fixed one since otherwise it is equal to $a_1$, but $\textbf{b}$ can not belong to $C_{a_1,\ldots}$ since $b_1<a_1$. Therefore, the first coordinate has to be a fixed one, hence equal to $b_1$.
\end{enumerate}
Proceeding in the same way we construct a cone in $A_{a_1,\ldots,a_n}$ that contains $(b_1,\ldots,b_n)$. From the construction it is clear that this cone is unique, which finishes our proof.
\end{proof}

We have seen that given $\mathbb N^n=C_{0,0,\ldots,0}$ and a point $(a_1,\ldots,a_n)\in C_{0,0,\ldots,0}$, we can decompose $C_{0,0,\ldots,0}$ into a disjoint union of cones, associated to this point, where the unique cone of dimension $n$ is $C_{a_1,\ldots,a_n}$ and all other cones have strictly lower dimensions. It is not hard to see that we can replace $\mathbb N^n=C_{0,0,\ldots,0}$ with any other cone and replace $(a_1,\ldots,a_n)$ with any point in this cone and have a similar decomposition. First of all, assume that all coordinates of this cone are non-fixed, say, we have $C_{s_1,\ldots,s_n}$ and a point $(s_1+k_1,\ldots,s_n+k_n)\in C_{s_1,\ldots,s_n} $ for some nonnegative integers $k_1,\ldots,k_n$. Clearly, points in $C_{s_1,\ldots,s_n}$ are in bijection with points in $C_{0,0,\ldots,0}$ under the obvious shift. We can find the decomposition of $C_{0,0,\ldots,0}$ with respect to $(k_1,\ldots,k_n)$ as in the proposition above and then shift all the cones in the decomposition by $(s_1,\ldots,s_n)$ to get new cones. This will give us the desired decomposition of $C_{s_1,\ldots,s_n}$. Again, the unique cone of dimension $n$ in this decomposition is $C_{s_1+k_1,\ldots,s_n+k_1}$. Now assume that some coordinates of our cone are fixed, say, we have $C_{s_1,\ldots,s_m,\underline{s_{m+1}}\ldots,\underline{s_n}}$ (without loss of generality, we can assume that fixed coordinates are the last $(n-m)$ coordinates) and $(s_1+k_1,\ldots,s_m+k_m,s_{m+1},\ldots,s_n)\in C_{s_1,\ldots,s_m,\underline{s_{m+1}}\ldots,\underline{s_n}}$. Note that this is an $m$-dimensional cone and points in this cone are in bijection with points in $\mathbb N^m$, in particular, $(s_1+k_1,\ldots,s_m+k_m,s_{m+1},\ldots,s_n)\leftrightarrow(k_1,\ldots,k_m)$. Thus we can find the decomposition of $\mathbb N^m$ with respect to $(k_1,\ldots,k_m)$, then shift all cones by $(s_1\ldots,s_m)$ (this will give us the first $m$ coordinates of each cone) and the last $(n-m)$ coordinates of each cone in this decomposition are $\underline{s_{m+1}},\ldots,\underline{s_n}$. The unique $m$-dimensional cone in this decomposition is $C_{s_1+k_1,\ldots,s_m+k_m,\underline {s_{m+1}},\ldots,\underline{s_n}}$, others have lower dimensions. Therefore, the previous proposition can be restated in a more general context:
\begin{theorem}
\label{th1}
Given any cone $C$ in $\mathbb N^n$ of dimension $k$ and a point $\textbf {a}\in C$, we can decompose $C$ into a disjoint union of finitely many cones, where exactly one cone has dimension $k$ and vertex $\textbf {a}$, and all other cones have strictly lower dimensions.
\end{theorem}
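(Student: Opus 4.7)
The plan is to reduce the theorem to the previously proved lemma (the $\mathbb N^n = C_{0,\ldots,0}$ case) by exploiting the natural affine bijection between a $k$-dimensional cone in $\mathbb N^n$ and $\mathbb N^k$. The discussion between the lemma and Theorem~\ref{th1} essentially sketches this argument, so the task is to formalize it.

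First I would set up notation. Let $C$ be a cone of dimension $k$. After relabeling coordinates, I may assume that the non-fixed coordinates are the first $k$ ones, so $C = C_{s_1,\ldots,s_k,\underline{s_{k+1}},\ldots,\underline{s_n}}$. The point $\mathbf{a}\in C$ then has the shape $(s_1+k_1,\ldots,s_k+k_k,s_{k+1},\ldots,s_n)$ for uniquely determined nonnegative integers $k_1,\ldots,k_k$. The key observation is the bijection
$$\varphi: \mathbb N^k \longrightarrow C,\qquad (y_1,\ldots,y_k)\longmapsto (s_1+y_1,\ldots,s_k+y_k,s_{k+1},\ldots,s_n),$$
which sends $(k_1,\ldots,k_k)$ to $\mathbf{a}$.

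Next I would apply the previous lemma to $\mathbb N^k$ and the point $(k_1,\ldots,k_k)$, obtaining a disjoint decomposition of $\mathbb N^k$ by the cones in $A_{k_1,\ldots,k_k}$, with the unique $k$-dimensional cone being $C_{k_1,\ldots,k_k}$. I would then transport each such cone back to $C$ by the following rule: given a cone $C_{t_1,\ldots,t_k}$ in $A_{k_1,\ldots,k_k}$ (where each $t_i$ is either $k_i$ without underline, or some $r_i<k_i$ with underline), its image is the cone in $\mathbb N^n$ whose first $k$ coordinates are $s_i+t_i$ (keeping the same underline/non-underline status as $t_i$) and whose last $n-k$ coordinates are $\underline{s_{k+1}},\ldots,\underline{s_n}$. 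Because $\varphi$ is a bijection that simply shifts the first $k$ coordinates and pins the last $n-k$ to their fixed values, it carries a disjoint union onto a disjoint union, and it sends the dimension of each cone in $A_{k_1,\ldots,k_k}$ to the same dimension in the image (the additional $n-k$ coordinates are fixed and contribute nothing to dimension). Therefore the images form a disjoint cover of $C$, and the unique image cone of dimension $k$ is $C_{s_1+k_1,\ldots,s_k+k_k,\underline{s_{k+1}},\ldots,\underline{s_n}}$, whose vertex is exactly $\mathbf{a}$.

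There is no real obstacle here; the only thing to be careful about is bookkeeping, namely checking that the transported cones genuinely live in $A$-type families (with the correct choice of underlined vs.\ non-underlined entries at the first $k$ coordinates, which is inherited from $A_{k_1,\ldots,k_k}$) and that the last $n-k$ coordinates remain fixed throughout so that no image cone can accidentally have dimension larger than $k$. After this verification the theorem follows at once from the lemma.
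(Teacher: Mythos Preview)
Your proposal is correct and follows essentially the same approach as the paper: the paper's argument (given in the discussion immediately preceding the theorem) also reduces to the lemma for $\mathbb N^k$ via the affine bijection that shifts the non-fixed coordinates by $(s_1,\ldots,s_k)$ and freezes the remaining ones at $\underline{s_{k+1}},\ldots,\underline{s_n}$. Your write-up simply makes that bijection $\varphi$ explicit and records the bookkeeping about underlines and dimensions, which is exactly what the paper sketches in words.
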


\begin{example}
Let $n=5$ and consider $C_{\underline{5},7,\underline{4},2,\underline{3}}$. Consider $(a_1,\ldots,a_5)=(5,9,4,3,3)\in C_{\underline{5},7,\underline{4},2,\underline{3}}$. The first, the third and the fifth coordinates are fixed once and forever, that is, all cones that we will find have the form $C_{\underline{5},?,\underline{4},?,\underline{3}}$. We are left with the second and the fourth coordinate, that is, $(7,2)$ for the cone and $(9,3)$ for the point. Shifting in the negative direction by $(7,2)$, we will get $(0,0)$ and $(2,1)$ respectively. Thus, it is enough to find the decomposition of $\mathbb N^2$ with respect to $(2,1)$. This is exactly what we did in Example~\ref{ex2}. We obtained $A_{2,1}=\{C_{\underline{0},\underline{0}},C_{\underline{0},1},C_{\underline{1},\underline{0}},C_{\underline{1},1},C_{2,\underline{0}},C_{2,1}\}$. Shifting in the positive direction by $(7,2)$ gives us 
$\{C_{\underline{7},\underline{2}},C_{\underline{7},3},C_{\underline{8},\underline{2}},C_{\underline{8},3},C_{9,\underline{2}},C_{9,3}\}$
and inserting back the first, the third and the fifth coordinates gives us
$$\{C_{\underline{5},\underline{7},\underline{4},\underline{2},\underline{3}},C_{\underline{5},\underline{7},\underline{4},3,\underline{3}},C_{\underline{5},\underline{8},\underline{4},\underline{2},\underline{3}},C_{\underline{5},\underline{8},\underline{4},3,\underline{3}},C_{\underline{5},9,\underline{4},\underline{2},\underline{3}},C_{\underline{5},9,\underline{4},3,\underline{3}}\}.$$

Therefore, $C_{\underline{5},7,\underline{4},2,\underline{3}}$ is a disjoint union of these six cones.
\end{example}

Now we will use these results on monomial ideals. Let $I$ be a good ideal. Then for any vector of nonnegative integers $(a_1,\ldots,a_n)$ we have defined a box $B_{a_1,\ldots,a_n}$ and the corresponding ideal $I_{a_1,\ldots,a_n}$. Clearly, there is a bijection between points in $\mathbb N^n$ and boxes/ideals; recall that if $(a_1,\ldots,a_n)\le(b_1,\ldots,b_n)$, then $I_{a_1,\ldots, a_n}\subseteq I_{b_1,\ldots, b_n}$ by Corollary~\ref{cor}.
\begin{theorem}
\label{th2}
For any good ideal $I$ there exists a finite coloring of $\mathbb N^n$ such that if $(a_1,\ldots,a_n)$ has the same color as $(b_1,\ldots,b_n)$, then $I_{a_1,\ldots,a_n}=I_{b_1,\ldots,b_n}$ and for each color the set of points of this color forms a cone.
\end{theorem}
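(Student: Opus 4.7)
The plan is to induct on the dimension of the ambient cone, using Theorem~\ref{th1} to peel off a top-dimensional ``stable'' piece at each stage and the Noetherian property of $R$ to guarantee that such a piece always exists.

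First I would establish a global stabilization point for the family $\{I_{a_1,\ldots,a_n}\}_{(a_1,\ldots,a_n)\in\mathbb N^n}$. By Corollary~\ref{cor} this family is directed under the componentwise order, so $J:=\bigcup_{\textbf{a}\in\mathbb N^n}I_{\textbf{a}}$ is an ideal of $R$. Since $R$ is Noetherian, $J$ is finitely generated; each of its finitely many generators lies in some $I_{\textbf{a}_j}$, and if $\textbf{a}^*$ denotes the coordinatewise maximum of the $\textbf{a}_j$ then $I_{\textbf{a}^*}\supseteq J$, forcing $I_{\textbf{a}^*}=J$. Monotonicity then gives $I_{\textbf{b}}=I_{\textbf{a}^*}$ for every $\textbf{b}\ge\textbf{a}^*$, i.e.\ $I_{\textbf{b}}$ is constant on the whole cone $C_{\textbf{a}^*}$.

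Next I would feed $(\mathbb N^n,\textbf{a}^*)$ into Theorem~\ref{th1} to obtain a disjoint decomposition of $\mathbb N^n$ into finitely many cones whose unique $n$-dimensional member is $C_{\textbf{a}^*}$. I assign $C_{\textbf{a}^*}$ one color and recurse on each of the remaining, strictly lower-dimensional cones. In a recursive call on a cone $C$ with certain coordinates held fixed, the same argument produces a stabilization point inside $C$: Corollary~\ref{cor} still applies to any two comparable points of $C$, so the family $\{I_{\textbf{b}}\}_{\textbf{b}\in C}$ is again directed and its union is a finitely generated ideal of $R$. Theorem~\ref{th1} then peels off a top-dimensional sub-cone on which $I_{\textbf{b}}$ is constant, leaving only residual cones of strictly smaller dimension for the next recursive step. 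Because each recursive call strictly drops the dimension, the process terminates after at most $n$ levels with singleton cones, and assigning a distinct color to each cone in the resulting finite decomposition yields the required coloring.

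The main subtlety I expect is the bookkeeping in the recursive step: one must check that the directedness and Noetherian stabilization argument carries over verbatim to a cone $C$ in which some coordinates are frozen, and that each successive application of Theorem~\ref{th1} genuinely produces only strictly lower-dimensional residual cones so that the induction bottoms out. Both points follow from the structure of cones defined earlier together with Corollary~\ref{cor}, but they are where the argument earns its keep.
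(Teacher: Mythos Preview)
Your proposal is correct and follows essentially the same approach as the paper: induct on the dimension of the ambient cone, use Noetherianity to find a stabilization point, apply Theorem~\ref{th1} to split off the top-dimensional cone, and recurse on the lower-dimensional residual cones. The only cosmetic difference is that the paper phrases the Noetherian step as a contrapositive (no stable point would yield an infinite strictly ascending chain of ideals), whereas you argue directly via the finitely generated directed union $\bigcup_{\textbf a}I_{\textbf a}$; both are standard and equivalent.
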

\begin{proof}
 We use induction on the highest dimension of uncolored cones. We are starting with an $n$-dimensional cone $\mathbb N^n$. We will show how to obtain finitely many cones of strictly lower dimensions, each of which will then be treated similarly in a recursive way. First of all, note that it is possible to find a point $(a_1,\ldots, a_n)$ such that the following holds: if $(b_1,\ldots,b_n)\ge(a_1,\ldots,a_n)$, then $I_{a_1,\ldots,a_n}=I_{b_1,\ldots,b_n}$. Indeed, if we assume the converse, then for every point of $\mathbb N^n$ there exists a strictly larger point that corresponds to a strictly larger ideal, therefore, we can build an infinite chain of strictly increasing ideals, which is impossible, for example, by Noetherianity of the polynomial ring. So existence of such a point $(a_1,\ldots, a_n)$ is justified. Then from the Theorem~\ref{th1}, $\mathbb N^n$ can be covered with a disjoint union of (finitely many) cones in $A_{a_1,\ldots,a_n}$. The unique $n$-dimensional cone in $A_{a_1,\ldots,a_n}$ is $C_{a_1,\ldots,a_n}$ and, as we have just figured out, we may paint all points in this cone with the same color. Now we are left with a finite disjoint union of cones of dimensions at most $n-1$ which need to be painted and we apply induction on each of them, lowering the maximal dimension by 1 again. Since it is a finite process, in the end we will obtain a finite coloring of $\mathbb N^n$.
\end{proof}

We remark that the coloring described above is not unique since it depends on the choice of $(a_1,\ldots, a_n)$ and its lower dimensional analogues. 

\begin{example}
Let $I$ be the ideal in Example~\ref{ex}. We can choose $(a_1,a_2)=(1,1)$ since $I_{b_1,b_2}=I_{1,1}$ for all $(b_1,b_2)\ge (1,1)$. Then $\mathbb N^2$ is a disjoint union of $\textcolor{red}{C_{1,1}}$, $C_{\underline{0},1}$, $C_{1,\underline{0}}$  and $\textcolor{blue}{C_{\underline{0},\underline{0}}}$. Now consider $C_{\underline{0},1}$. We see that $I_{0,b}=I_{0,2}$ for all $b\ge 2$. Therefore, we consider the decomposition of $C_{\underline{0},1}$ with respect to $(0,2)$: $C_{\underline{0},1}$ is a disjoint union of $\textcolor{green}{C_{\underline{0},2}}$ and $\textcolor{cyan}{C_{\underline{0},\underline{1}}}$. Similarly, $C_{1,\underline{0}}$ is a disjoint union of $\textcolor{orange}{C_{2,\underline{0}}}$ and $\textcolor{magenta}{C_{\underline{1},\underline{0}}}$. The left picture in Figure~3 describes the coloring we have just discussed. The picture on the right describes another possible coloring if, for instance, we choose $(a_1,a_2)=(0,2)$.

\begin{figure}[H]
\centering
\begin{tikzpicture}
	\filldraw[fill=blue]    (0,0) rectangle (1,1);
	\filldraw[fill=magenta] (1,0) rectangle (2,1);
	\filldraw[fill=orange]  (2,0) rectangle (3,1);
	\filldraw[fill=orange]  (3,0) rectangle (4,1);

	\filldraw[fill=cyan]    (0,1) rectangle (1,2);
	\filldraw[fill=red]     (1,1) rectangle (2,2);
	\filldraw[fill=red]     (2,1) rectangle (3,2);
	\filldraw[fill=red]     (3,1) rectangle (4,2);

	\filldraw[fill=green]   (0,2) rectangle (1,3);
	\filldraw[fill=red]     (1,2) rectangle (2,3);
	\filldraw[fill=red]     (2,2) rectangle (3,3);
	\filldraw[fill=red]     (3,2) rectangle (4,3);

	\filldraw[fill=green]   (0,3) rectangle (1,4);
	\filldraw[fill=red]     (1,3) rectangle (2,4);
	\filldraw[fill=red]     (2,3) rectangle (3,4);
	\filldraw[fill=red]     (3,3) rectangle (4,4);
	\foreach \x in {0,...,4}{
    \node [below, very thin] at (\x,0) {$_\x$};
    \node [left, very thin] at (0,\x) {$_\x$};
    }
    
\end{tikzpicture}
\hspace{2cm}
\begin{tikzpicture}
	\filldraw[fill=magenta] (0,0) rectangle (1,1);
	\filldraw[fill=blue] (1,0) rectangle (2,1);
	\filldraw[fill=cyan] (2,0) rectangle (3,1);
	\filldraw[fill=cyan] (3,0) rectangle (4,1);

	\filldraw[fill=green] (0,1) rectangle (1,2);
	\filldraw[fill=red] (1,1) rectangle (2,2);
	\filldraw[fill=red] (2,1) rectangle (3,2);
	\filldraw[fill=red] (3,1) rectangle (4,2);

	\filldraw[fill=orange] (0,2) rectangle (1,3);
	\filldraw[fill=orange] (1,2) rectangle (2,3);
	\filldraw[fill=orange] (2,2) rectangle (3,3);
	\filldraw[fill=orange] (3,2) rectangle (4,3);

	\filldraw[fill=orange] (0,3) rectangle (1,4);
	\filldraw[fill=orange] (1,3) rectangle (2,4);
	\filldraw[fill=orange] (2,3) rectangle (3,4);
	\filldraw[fill=orange] (3,3) rectangle (4,4);
	 \foreach \x in {0,...,4}{
    \node [below, very thin] at (\x,0) {$_\x$};
    }
     \foreach \x in {0,...,4}{
    \node [left, very thin] at (0,\x) {$_\x$};
    }
\end{tikzpicture}
\caption{two examples of possible colorings of $\mathbb{N}^2$, associated to $I$} 
\end{figure}
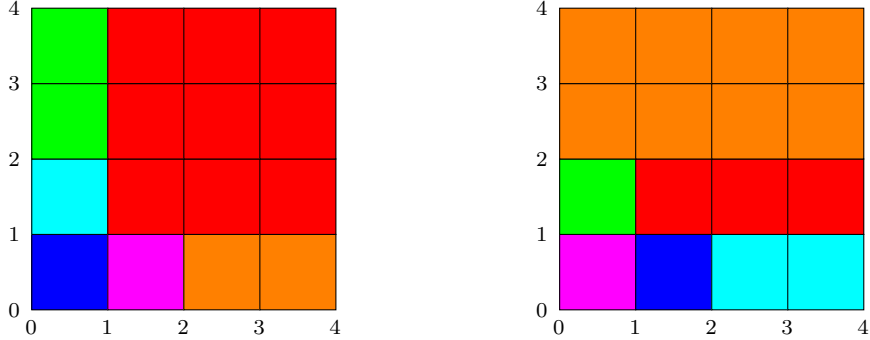

\end{example}

Given a good ideal $I$, any coloring as in Theorem~\ref{th2} represents a finite disjoint union of cones. Each cone has a vertex. Let $L$ denote the maximum of sums of coordinates of these vertices. This number depends on $I$ and on the coloring we choose, but we will not put any additional indices: as soon as we found some coloring (which exists according to Theorem~\ref{th2}), we simply work with it once and forever. For example, for both colorings in Figure~3 we have $L=2$. 
\begin{remark}
\label{rem2}
Note that from the construction in Theorem~\ref{th2} it is clear that $L$ can not be attained at a zero dimensional cone, in other words, for any zero dimensional cone, the sum of coordinates of its vertex is strictly less than $L$.
\end{remark}
The geometric meaning of this number is the following: starting from $I^{L+1}$, powers of $I$ look similar to each other in some sense. For instance, for the left coloring in Figure~3 we know that every power of $I$ starting from $I^3$ consists of a green box, an orange box and several red boxes and we exactly know where each of them is. This means, there is a pattern on high powers of $I$, and this is a key point for finding the Ratliff--Rush closure of $I$.

\section{The main result}
Now we are ready to prove our main theorem, but first we need a preliminary lemma.
\begin{lemma}
\label{lem1}
Let $I$ be a good ideal and let $Q$ be any nonnegative integer. Then there exists a number $L(Q)$ such that for any $l\ge L(Q)$ the following holds: for every minimal generator $m$ of $I^l$ there is an $i$ such that $m=m'\mu_i^Q$ and $m'$ is a minimal generator of $I^{l-Q}$.
\end{lemma}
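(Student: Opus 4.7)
The plan is to combine the box decomposition principle with the stabilization of the coloring (Theorem~\ref{th2}) and a pigeonhole on coordinate sums. Fix a coloring of $\mathbb N^n$ as in Theorem~\ref{th2} and let $L$ be the associated constant from Section~5. I propose to set $L(Q):=L+nQ+1$. Take any $l\ge L(Q)$ and any $m\in G(I^l)$; by the box decomposition principle there exist $(a_1,\ldots,a_n)$ with $a_1+\cdots+a_n=l-1$ and $m\in B_{a_1,\ldots,a_n}$, and writing $m=r\cdot\mu_1^{a_1}\cdots\mu_n^{a_n}$, the element $r$ is one of the defining minimal generators of $I_{a_1,\ldots,a_n}$.

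The heart of the argument is to locate an index $i$ such that $a_i\ge Q$ \emph{and} subtracting $Q$ from $a_i$ keeps us in the same color class. Let $C$ be the cone of the chosen coloring that contains $(a_1,\ldots,a_n)$, with vertex $(b_1,\ldots,b_n)$ and free (non-underlined) index set $F\subseteq\{1,\ldots,n\}$. Since $\sum a_j=l-1\ge L$, while the vertex of any $0$-dimensional cone has coordinate sum strictly less than $L$ by Remark~\ref{rem2}, the cone $C$ must satisfy $F\neq\emptyset$. Suppose, for contradiction, that $a_j<b_j+Q$ for every $j\in F$; together with $a_j=b_j$ for $j\notin F$ this yields
$$l-1=\sum_{j=1}^n a_j\ \le\ \sum_{j=1}^n b_j+(Q-1)|F|\ \le\ L+(Q-1)n,$$
which contradicts $l-1\ge L+nQ$. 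Hence some $i\in F$ satisfies $a_i\ge b_i+Q\ge Q$.

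With this $i$, the point $(a_1,\ldots,a_i-Q,\ldots,a_n)$ still lies in $C$ (same fixed coordinates, and $a_i-Q\ge b_i$ on the free ones), so Theorem~\ref{th2} gives $I_{a_1,\ldots,a_i-Q,\ldots,a_n}=I_{a_1,\ldots,a_n}$. Consequently $r$ is a minimal generator of $I_{a_1,\ldots,a_i-Q,\ldots,a_n}$ as well, and the definition of the latter ideal as a rescaling of $G(I^{l-Q})\cap B_{a_1,\ldots,a_i-Q,\ldots,a_n}$ by $\mu_1^{a_1}\cdots\mu_i^{a_i-Q}\cdots\mu_n^{a_n}$ identifies $r$ with a unique element of $G(I^{l-Q})$, namely
$$m':=r\cdot\mu_1^{a_1}\cdots\mu_i^{a_i-Q}\cdots\mu_n^{a_n}=m/\mu_i^Q.$$
Thus $m=m'\mu_i^Q$ with $m'\in G(I^{l-Q})$, which is the desired conclusion.

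The main obstacle is the second step: producing a \emph{descendable} direction inside the current color class, uniformly across all minimal generators. The argument crucially uses the equality $I_{a_1,\ldots,a_i-Q,\ldots,a_n}=I_{a_1,\ldots,a_n}$ provided by the coloring rather than the inclusion of Corollary~\ref{cor}, which runs in the wrong direction; and the $0$-dimensional cones, where no descent is available at all, are excluded precisely by the lower bound $l\ge L+1$ guaranteed by Remark~\ref{rem2}.
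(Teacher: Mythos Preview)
Your proof is correct and follows essentially the same route as the paper's: locate the box of $m$ and the cone of the coloring containing its coordinates, then run a pigeonhole on the coordinate sum to find a free direction in which one can descend by $Q$ while remaining in the same color class, so that the two box ideals coincide and $m/\mu_i^Q\in G(I^{l-Q})$. The only differences are cosmetic---your notation swaps the roles of $a$ and $b$, you separate free and fixed indices explicitly where the paper does so implicitly (observing at the end that the found index must be non-underlined), and your constant $L(Q)=L+nQ+1$ is slightly looser than the paper's $L+nQ-n+2$.
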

\begin{proof} 
If $Q=0$, the claim is trivial. Let $Q>0$ and let $L$ be the number defined in the end of Section~5. Take $L(Q)=L+nQ-n+2$ and let $l\ge L(Q)$. Let $m$ be a minimal generator of $I^l$, then it belongs to some box $B_{b_1,\ldots, b_n}$ with $b_1+\ldots+b_n=l-1\ge L+nQ-n+1$. We also know that $(b_1,\ldots, b_n)$ belongs to one of the cones from our coloring; assume that the vertex of this cone is $(a_1,\ldots,a_n)$ (some coordinates are underlined, some are not underlined). Now we want to find a coordinate $b_i$ such that $(b_1,\ldots, b_{i-1}, b_i-Q, b_{i+1},\ldots, b_n)$ belongs to the same cone. Assume that it is not possible. Then it follows that $b_1-Q\le a_1-1, \ldots, b_n-Q\le a_n-1$. These inequalities yield a contradiction $L<b_1+\ldots+b_n-nQ+n\le a_1+\ldots+a_n\le L$, where the last inequality follows from the definition of $L$. So we can find an index $i$ such that $b_i-Q\ge a_i$ (in particular, this implies that $a_i$ is not underlined). Without loss of generality we assume that $i=1$. That means, $(b_1,\ldots, b_n)$ and $(b_1-Q,b_2,\ldots, b_n)$ are both in the same cone. This implies that their colors are equal, which means $I_{b_1,\ldots, b_n}=I_{b_1-Q,b_2,\ldots,b_n}$. In other words, the set of monomials in $B_{b_1,\ldots, b_n}\cap G(I^{l})$ coincides with the set of monomials in $B_{b_1-Q,b_2,\ldots,b_n}\cap G(I^{l-Q})$ up to a shift by $\mu_1^Q$.
Therefore, if $m \in B_{b_1,\ldots, b_n}$ is a minimal generator of $I^l$, then $\frac{m}{\mu_1^Q}\in B_{b_1-Q,b_2,\ldots,b_n}$ is a minimal generator of $I^{l-Q}$, as desired.
\end{proof}

Now let us consider the following line of boxes which is in bijection with nonnegative integer points on the $x_1$-axis: $B_{0,0,\ldots,0}$, $B_{1,0,\ldots,0}$, $B_{2,0,\ldots,0}$ etc. Let $B_{q_1,0\ldots,0}$ be the stabilizing box of this sequence in a sense that $q_1$ is the smallest nonnegative integer such that $I_{t,0,\ldots,0}=I_{q_1,0,\ldots,0}$ for all $t\ge q_1$. Similarly, considering lines of boxes going along the other coordinate axes, we will get $q_2,q_3,\ldots, q_n$. Denote $q:=\max\{q_1,\ldots, q_n\}$.
\begin{theorem}
Let $I$ be a good ideal, let $L$, $q_i$ and $q$ be as above.  Then $\tilde{I}=I_{q_1,0,\ldots,0}\cap I_{0,q_2,\ldots,0}\cap\ldots\cap I_{0,\ldots,0,q_n}$.
\end{theorem}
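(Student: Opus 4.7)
The plan is to prove the two inclusions separately. For $\tilde I \subseteq I_{q_1,0,\ldots,0}\cap\cdots\cap I_{0,\ldots,0,q_n}$, I would first recall that the chain $\{I^{k+1}:I^k\}_{k\ge 0}$ is ascending (if $rI^k \subseteq I^{k+1}$, then $rI^{k+1} = rI\cdot I^k \subseteq I\cdot I^{k+1}$), hence it stabilizes by Noetherianity, so $\tilde I = I^{k+1}:I^k$ for all $k$ sufficiently large, in particular for $k \ge q$. Since $\mu_i^k \in I^k$, the containment $\langle\mu_i^k\rangle \subseteq I^k$ gives $\tilde I = I^{k+1}:I^k \subseteq I^{k+1}:\langle\mu_i^k\rangle$, and by Proposition~\ref{prop1} the right-hand side equals $I_{0,\ldots,0,k,0,\ldots,0}$ (with $k$ in position $i$). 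The definition of $q_i$ gives $I_{0,\ldots,0,k,0,\ldots,0} = I_{0,\ldots,0,q_i,0,\ldots,0}$ for $k\ge q_i$, so $\tilde I$ is contained in each of the $n$ ideals on the right, hence in their intersection.

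For the reverse inclusion, I would take a monomial $m$ lying in every $I_{0,\ldots,0,q_i,0,\ldots,0}$ and show $m \in \tilde I$ directly by verifying $mI^k \subseteq I^{k+1}$ for some large $k$. Using Proposition~\ref{prop1} and the stabilization defining each $q_i$, the hypothesis on $m$ translates into $m\mu_i^t \in I^{t+1}$ for every index $i$ and every $t\ge q_i$. Fix any $Q \ge q$ and let $L(Q)$ be the threshold provided by Lemma~\ref{lem1}. For every $k\ge L(Q)$ and every $g \in G(I^k)$, the lemma yields an index $i$ and a factorization $g = g'\mu_i^Q$ with $g' \in G(I^{k-Q})$. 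Multiplying by $m$ and regrouping,
$$mg \;=\; g'\cdot(m\mu_i^Q) \;\in\; I^{k-Q}\cdot I^{Q+1} \;=\; I^{k+1}.$$
Since this holds for every minimal generator $g$ of $I^k$ (and it suffices to check containment on such generators), we get $mI^k \subseteq I^{k+1}$, so $m \in I^{k+1}:I^k \subseteq \tilde I$.

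The main technical obstacle has already been absorbed into Lemma~\ref{lem1}: without its uniform statement that every minimal generator of a sufficiently high power factors through some $\mu_i^Q$, one could not bridge the gap between the individual witnesses $m\mu_i^Q \in I^{Q+1}$ (one per coordinate direction) and an arbitrary product $mg$. Once that lemma is available, no further appeal to the cone decomposition of Section~5 is needed inside the proof of the theorem itself; the argument reduces to the two-line verification above together with the easy colon manipulation in the first inclusion.
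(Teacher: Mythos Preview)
Your proof is correct and follows essentially the same route as the paper's: both inclusions are handled exactly as the paper does, with the $\supseteq$ direction reducing to Lemma~\ref{lem1} applied with $Q=q$ (the paper splits $m\mu_i^q$ as $m\mu_i^{q_i}\cdot\mu_i^{q-q_i}$, whereas you use $m\mu_i^Q\in I^{Q+1}$ directly, which is the same computation). One small wording issue: when you write ``in particular for $k\ge q$'' you have not shown that the colon chain stabilizes by index $q$; simply take $k$ large enough to be both $\ge q$ and past the stabilization point, and the argument goes through unchanged.
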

\begin{proof}
 $\subseteq$ Let $l\ge q$. We will show that $I^{l+1}:I^l\subseteq I_{q_1,0,\ldots,0}\cap I_{0,q_2,\ldots,0}\cap\ldots\cap I_{0,\ldots,0,q_n}$. In fact, we will show that $I^{l+1}:I^l\subseteq I_{q_1,0,\ldots,0}$, other inclusions are analogous. Since $I^{l+1}:I^l\subseteq I^{l+1}:\langle\mu_1^l\rangle$, it is sufficient to show that $I^{l+1}:\langle \mu_1^l\rangle\subseteq I_{q_1,0,\ldots,0}$. By Proposition \ref{prop1}, $I^{l+1}:\langle \mu_1^l\rangle=I_{l,0,\ldots,0}$ which equals $I_{q_1,0,\ldots,0}$, given the way $I_{q_1,0,\ldots,0}$ was defined and given that $l\ge q\ge q_1$ . Therefore, everything follows.

 $\supseteq$ Let $m\in I_{q_1,\ldots,0}\cap I_{0,q_2,\ldots,0}\cap\ldots\cap I_{0,\ldots,0,q_n}$, let $l\ge L(q)=L+nq-n+2$ (as in Lemma~\ref{lem1}). We will show that for every $m_l\in I^l$ we have $mm_l\in I^{l+1}$. It is enough to consider $m_l$ to be minimal generators of $I^l$. First of all, from Lemma~\ref{lem1} we know that we can factor out some $\mu_i^q$ from $m_l$ and get a minimal generator of $I^{l-q}$, that is, $m_l=\mu_i^qm_{l-q}$ for some index $i$ and $m_{l-q}$ a minimal generator in $I^{l-q}$. Also, since $m$ belongs (in particular) to  $I_{0,\ldots,0,q_i,0\ldots,0}=I^{q_i+1}:\langle\mu_i^{q_i}\rangle$, it means, $m\mu_i^{q_i}\in I^{q_i+1}$.

 Therefore, $mm_l=m\mu_i^{q_i}\mu_i^{q-q_i}m_{l-q}\in I^{l+1}$ since $m\mu_i^{q_i}\in I^{q_i+1}$, $\mu_i^{q-q_i}\in I^{q-q_i}$, $m_{l-q}\in I^{l-q}$.
\end{proof}

\section{Explicit computation of $I_{0,\ldots,0,q_i,0,\ldots,0}$}
 We have seen that, given a good ideal $I$, its Ratliff--Rush closure is computed as $\tilde{I}=I_{q_1,0,\ldots,0}\cap I_{0,q_2,\ldots,0}\cap\ldots\cap I_{0,\ldots,0,q_n}$. Surprisingly, no other boxes affect the Ratliff--Rush closure, but only those going along coordinate axes. Therefore, we would like to know more about $I_{0,\ldots,0,q_i,0\ldots,0}$. Let $i=1$, other cases are analogous. So far we only know that $I_{t,0,\ldots,0\ldots,0}=I^{t+1}:\langle\mu_1^t\rangle$. Computation of $I^{t}$ might take much time if $t$ is large enough. In addition, we do not know yet at which moment the line has stabilized. So far the process seems more complicated than it is. We will state a few results that will make this computation easier.
\begin{lemma}
\label{rem1}
 If $I$  is a good ideal, then $I_{t+1,0,\ldots,0}=(I_{t,0\ldots,0}\cdot I):\langle\mu_1\rangle$ for all $t\ge 0$.
\end{lemma}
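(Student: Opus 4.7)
The plan is to reduce everything to colon ideals via Proposition~\ref{prop1}, which gives $I_{t,0,\ldots,0}=I^{t+1}:\langle\mu_1^t\rangle$ and $I_{t+1,0,\ldots,0}=I^{t+2}:\langle\mu_1^{t+1}\rangle$. After this reduction, both inclusions become statements about decomposing monomials as products of generators, and the argument splits cleanly into a formal half and a structural half.

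For the inclusion $(I_{t,0,\ldots,0}\cdot I):\langle\mu_1\rangle\subseteq I_{t+1,0,\ldots,0}$, I will just multiply through: if a monomial $m$ satisfies $a\cdot b\mid m\mu_1$ for some $a\in I_{t,0,\ldots,0}$ and $b\in I$, then $(a\mu_1^t)\cdot b$ divides $m\mu_1^{t+1}$, and since $a\mu_1^t\in I^{t+1}$ and $b\in I$, this places $m\mu_1^{t+1}$ in $I^{t+2}$, hence $m\in I_{t+1,0,\ldots,0}$. This direction is automatic.

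For the other inclusion, let $m$ be a monomial in $I_{t+1,0,\ldots,0}$, so that $m\mu_1^{t+1}\in I^{t+2}$. Then $m\mu_1^{t+1}$ is divisible by some product $g_1\cdots g_{t+2}$ of minimal generators of $I$, say $m\mu_1^{t+1}=g_1\cdots g_{t+2}\cdot r$ for a monomial $r$. The structural insight is that each $g_i$ lies in $B_{0,\ldots,0}$, so its $x_1$-exponent is at most $d_1$. I will pick off any one of them, say $g_{t+2}$, and set $a:=g_1\cdots g_{t+1}\cdot r=m\mu_1^{t+1}/g_{t+2}\in I^{t+1}$; the bound on the $x_1$-exponent of $g_{t+2}$ forces the $x_1$-exponent of $a$ to be at least $(t+1)d_1-d_1=td_1$, so $\mu_1^t$ divides $a$ and $a/\mu_1^t\in I^{t+1}:\langle\mu_1^t\rangle=I_{t,0,\ldots,0}$. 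The product $(a/\mu_1^t)\cdot g_{t+2}$ then equals $m\mu_1$, showing $m\mu_1\in I_{t,0,\ldots,0}\cdot I$.

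The heart of the argument is the elementary numerical observation $G(I)\subseteq B_{0,\ldots,0}$, which lets us always extract a factor of $\mu_1^t$ from any product of $t+2$ generators representing an element of $I^{t+2}$; there is no real obstacle beyond organizing this bookkeeping, and the good-ideal hypothesis itself enters only through the colon-ideal description of $I_{t,0,\ldots,0}$ furnished by Proposition~\ref{prop1}.
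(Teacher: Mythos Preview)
Your proof is correct and follows essentially the same approach as the paper's: both directions are reduced via Proposition~\ref{prop1} to the colon-ideal descriptions, the inclusion $(I_{t,0,\ldots,0}\cdot I):\langle\mu_1\rangle\subseteq I_{t+1,0,\ldots,0}$ is handled by multiplying through by $\mu_1^t$, and the reverse inclusion uses exactly the observation that every $g_i\in G(I)$ has $x_1$-exponent at most $d_1$ to peel off one generator and leave a multiple of $\mu_1^t$. Your remark that the good-ideal hypothesis enters only through Proposition~\ref{prop1} is also accurate.
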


\begin{proof}
 $\subseteq$ Let $m\in I_{t+1,0,\ldots,0}=I^{t+2}:\langle \mu_1^{t+1}\rangle$. Then $m\mu_1^{t+1}=cg_1\cdots g_{t+2}\in I^{t+2}$, where $g_1,\ldots,g_{t+2}$ are minimal generators of $I$ and $c$ is some monomial. We claim that $cg_1\cdots g_{t+1}$ is divisible by $\mu_1^{t}$. Indeed, if it is not the case, then the $x_1$-exponent of $g_{t+2}$ is strictly greater than $d_1$, which is impossible since $g_{t+2}$ is a minimal generator of $I$. Therefore, $m\mu_1=\frac{cg_1\cdots g_{t+1}}{\mu_1^{t}}g_{t+2}$. Since $\frac{cg_1\cdots g_{t+1}}{\mu_1^{t}}\in I_{t,0,\ldots,0}$ and $g_{t+2}\in I$, we are done.

 $\supseteq$ Let $m\in(I_{t,0\ldots,0}\cdot I):\langle\mu_1\rangle$. Then $m\mu_1=gm_1$, where $g\in I$, $m_1\in I_{t,0\ldots,0}$. Then $m\mu_1^{t+1}=m\mu_1\mu_1^{t}=gm_1\mu_1^{t}\in I^{t+2}$ since $g\in I$ and $m_1\mu_1^{t}\in I^{t+1}$.
 \end{proof}
 \begin{remark}
We would like to point out that if $I_{t,0,\ldots,0}=I_{t+1,0,\ldots,0}$, then the line has stabilized, that is, $I_{k,0,\ldots,0}=I_{t,0,\ldots,0}$ for all $k\ge t$. This is a direct corollary of Lemma~\ref{rem1}.
 \end{remark}
 \begin{remark}
For all $t\ge 0$ let $E_t:=G(I_{t,0,\ldots,0})$ and for all $t\ge 1$ let
$$F_{t}:=\{m \in E_t=G(I_{t,0\ldots,0})\mid m\not\in I_{t-1,0,\ldots,0}\}.$$
We also set $E_{-1}=\emptyset$, $F_0=G(I)$. We will show how, given $\{E_{t-1}, F_{t}\}$, one can obtain $\{E_t, F_{t+1}\}$ for any $t\ge 0$.
Clearly, $E_{t}$ is the reduced union of $E_{t-1}$ and $F_{t}$.  From Lemma~\ref{rem1} we remember that
$I_{t+1,0,\ldots,0}=(I_{t,0,\ldots,0}\cdot I):\langle\mu_1\rangle=(\langle E_{t-1} \cup F_{t}\rangle\cdot I):\langle\mu_1\rangle=((I_{t-1,\ldots,0}+\langle F_{t}\rangle)\cdot I):\langle\mu_1\rangle=(I_{t-1,\ldots,0}\cdot I+\langle F_{t}\rangle\cdot I):\langle\mu_1\rangle=(I_{t-1,\ldots,0}\cdot I):\langle\mu_1\rangle+(\langle F_{t}\rangle\cdot I):\langle\mu_1\rangle=I_{t,0,\ldots,0}+(\langle F_{t}\rangle\cdot I):\langle\mu_1\rangle$. Therefore, we conclude that minimal generators of $I_{t+1,0,\ldots,0}$ which are not in $I_{t,0,\ldots,0}$  (our future $F_{t+1}$) could only be among the minimal generators of $(\langle F_{t}\rangle\cdot I):\langle\mu_1\rangle$, that is, only new monomials from the previous iteration can give rise to new monomials in the next iteration. Therefore, in order to compute $F_{t+1}$ we need to compute $A_t:=\left\{\frac{fm}{\gcd (fm,\mu_1)} \mid f\in F_t, m\in G(I)\right\}$, reduce this set and throw away monomials that are already in $\langle E_{t}\rangle=I_{t,0,\ldots,0}$. If $F_{t+1}=\emptyset$, it means that $I_{t+1,0\ldots,0}=I_{t,0\ldots,0}$ and $E_{t}$ is the desired generating set.
 \end{remark}
\begin{remark}
Now we know that in order to compute $F_{t+1}$  we need to compute $A_t=\left\{\frac{fm}{\gcd (fm,\mu_1)} \mid f\in F_t, m\in G(I)\right\}$, reduce this set and throw away monomials that are already in $\langle E_{t}\rangle=I_{t,0,\ldots,0}$, where $E_t$ has already been computed. This is already quite straightforward, but we can simplify the calculations a bit more. First of all note that if $m=\mu_1$, then for any $f\in F_t$ we have $\frac{fm}{\gcd(fm,\mu_1)}=f\in \langle F_t\rangle \subseteq \langle E_t\rangle$ and thus this monomial will be thrown away anyway. If $m=\mu_i$ for some $2\le i\le n$, say, $m=\mu_2$, then  for any $f\in F_t$ we get that $\frac{fm}{\gcd(fm,\mu_1)}$ is divisible by $\mu_2\in \langle E_t\rangle$ (recall that all $\mu_i$ are minimal generators of all $I_{a_1,\ldots,a_n}$) and will thus be thrown away. In particular, this implies that in the definition of $A_t$ one can replace $G(I)$ with $P(I):=G(I)\backslash\{\mu_1,\ldots,\mu_n\}$. Another observation is the following. Assume that $f\in F_t$, $m\in P(I)$. Write $fm=x_1^{\alpha_1}\cdots x_n^{\alpha_n}$. If $\alpha_i\ge d_i$ for some $2\le i \le n$, say, $\alpha_2\ge d_2$, then $\frac{fm}{\gcd (fm,\mu_1)}$ is a again a multiple of $\mu_2\in \langle E_t\rangle)$. Therefore, in the definition of $A_t$ we may force that $\mathrm{deg}_{x_i}(fm)<d_i$ for all $2\le i\le n$, that is, we may take $A_t:=\{\frac{fm}{\gcd(fm,\mu_1)} \mid f\in F_t, m\in P(I), \mathrm{deg}_{x_i}(fm)<d_i, 2\le i\le n\}$. Finally, consider $x_1^{\alpha_1}\cdots x_n^{\alpha_n}\mu_1^t=f\mu_1^tm\in I^{t+2}$ since $f\mu_1^t\in I^{t+1}$ according to Proposition~\ref{prop1}. Since we have imposed $\alpha_i<d_i$ for all $2\le i\le n$, we must have $\alpha_1\ge d_1$, since otherwise $f\mu_1^tm$ would belong to a box with the sum of coordinates at most $t$. Thus $\gcd(fm,\mu_1)=\mu_1$. Therefore, we conclude that we can write
$A_t=\left\{\frac{fm}{\mu_1} \mid f\in F_t, m\in P(I), \mathrm{deg}_{x_i}(fm)<d_i, 2\le i\le n\right\}$. In order to compute $F_{t+1}$ one still needs to reduce this set and throw away monomials which are already in $\langle E_t\rangle$, if needed.

\end{remark}
The algorithm below produces $E=G(I_{q_1,0,\ldots,0})$ given the input $G(I)$.

 \begin{algorithm}[H]
 \Begin{
 $E:=\emptyset$, $F:=G(I)$;
 
 \While{$F\not=\emptyset$}{
  $E:=\text{reduce}(E\cup F)$\;

  $F_1:=\emptyset$\;
  \For{$m\in P(I)=G(I)\backslash\{\mu_1,\ldots,\mu_n\}$, $f\in F$}
  {
  
  \If{$\mathrm{deg}_{x_i}(fm)<d_i$ {\rm for all} $2\le i\le n$}
  {
   $a:=\frac{fm}{\mu_1}$\;
  \If{$a\not\in \langle E\rangle$}
  {
   $F_1:=F_1\cup \{a\}$\;
 
   }
 
   }
  
  }
   $F:=\text{reduce}(F_1)$\;
   
  }
  print $E$\;
 }
 \end{algorithm}

\section{Examples}
 \begin{example}
Let $R=\mathbb K[x,y,z]$ and let 
$$I=\langle \mu_1,\mu_2,\mu_3,m_1,m_2,m_3\rangle=\langle x^{29}, y^{29}, z^{29}, x^{28}y^8z^8, x^8y^{28}z^8, x^8y^8z^{28}\rangle\subset R.$$
Since $I$ satisfies the sufficient condition, it is a good ideal. Computations in Singular show that $$I^2:I=I+\langle x^{27}y^{27}z^{27}\rangle,$$
 $$I^3:I^2=I^4:I^3=I+\langle x^{26}y^{27}z^{27}, x^{27}y^{26}z^{27}, x^{27}y^{27}z^{26}\rangle,$$
 $$I^5:I^4=I^6:I^5=\cdots=I^{10}:I^9=I+\langle x^{26}y^{26}z^{26}\rangle.$$
 It is natural to conjecture that $\tilde{I}=I+\langle x^{26}y^{26}z^{26}\rangle.$
 Now let us see what we get if we apply the algorithm above. We start with $E_{-1}=\emptyset$, $F_0=G(I)$. Then we obtain $E_0$ by reducing $E_{-1}\cup F_0$, that is, $E_0=G(I)$ (as it should be). In order to compute $F_1$, we take all products of $F_0=G(I)$ with $P(I)$, keeping in mind that $y-$ and $z-$ coordinates of each product need to be less than 29, and divide each such product by $\mu_1$. If we take $f=\mu_1\in F_0=G(I)$ and any $m\in P(I)$, then $\frac{fm}{\mu_1}=m$  will be thrown away (and this always happens in the first iteration, but never afterwards since in the higher iterations $\mu_1$ never belongs to any of the considered sets). The only monomial that is not thrown away is $\frac{m_1^2}{\mu_1}=\frac{x^{56}y^{16}z^{16}}{x^{29}}=x^{27}y^{16}z^{16}$. This monomial is not in $\langle E_0\rangle$, therefore, we add it to our set $F_1$ (and this set is already reduced). Therefore, $E_{0}=G(I)$, $F_1=\{x^{27}y^{16}z^{16}\}$. Now $E_1=E_0\cup F_1=G(I)\cup \{x^{27}y^{16}z^{16}\}$ (this union is already reduced), and in order to compute $F_2$ we need to multiply $x^{27}y^{16}z^{16}$ with monomials from $P(I)$ (keeping in mind the condition on $y-$ and $z-$ coordinates) and divide the products by $\mu_1$. The only possible monomial is $\frac{x^{27}y^{16}z^{16}\cdot m_1}{\mu_1}=x^{26}y^{24}z^{24}$. This monomial is not in $\langle E_1\rangle$, therefore, $F_2=\{x^{26}y^{24}z^{24}\}$. $E_2=E_1\cup F_2=G(I)\cup \{x^{27}y^{16}z^{16},x^{26}y^{24}z^{24}\}$ (this set is already reduced) and if we try to compute $F_3$, we see that we can not get any new monomials. Therefore, $F_3=\emptyset$ and the stabilizing point is $I_{2,0,0}=\langle E_2\rangle=I+\langle x^{27}y^{16}z^{16},x^{26}y^{24}z^{24}\rangle$

 By symmetry,
 $$I_{0,2,0}=I+\langle x^{16}y^{27}z^{16}, x^{24}y^{26}z^{24}\rangle$$ and $$I_{0,0,2}=I+\langle x^{16}y^{16}z^{27}, x^{24}y^{24}z^{26}\rangle.$$ According to the theorem, $\tilde{I}=I_{2,0,0}\cap I_{0,2,0}\cap I_{0,0,2}=I+\langle x^{26}y^{26}z^{26}\rangle$, just as expected.
 \end{example}
 \begin{example}

Consider $I=\langle x^{53},y^{56},z^{59},w^{61},x^{50}y^{18}z^{20}w^{25},x^{15}y^{54}z^{22}w^{24},\\ x^{18}y^{20}z^{56}w^{22},  x^{16}y^{19}z^{23}w^{60}\rangle\subset\mathbb K[x,y,z,w]$.
 
 Since $I$ satisfies the sufficient condition, it is a good ideal.
 Then, applying the algorithm from Section~7, we will get
 $$I_{1,0,0,0}=I+\langle x^{47}y^{36}z^{40}w^{50}\rangle=I_{2,0,0,0}=I_{3,0,0,0}=I_{4,0,0,0}=\cdots$$
 $$I_{0,1,0,0}=I+\langle x^{30}y^{52}z^{44}w^{48}\rangle=I_{0,2,0,0}=I_{0,3,0,0}=I_{0,4,0,0}=\cdots$$
 $$I_{0,0,1,0}=I+\langle x^{36}y^{40}z^{53}w^{44}\rangle=I_{0,0,2,0}=I_{0,0,3,0}=I_{0,0,4,0}=\cdots$$
 $$I_{0,0,0,1}=I+\langle x^{32}y^{38}z^{46}w^{59}\rangle=I_{0,0,0,2}=I_{0,0,0,3}=I_{0,0,0,4}=\cdots$$
Then $\tilde{I}=I_{1,0,0,0}\cap I_{0,1,0,0}\cap I_{0,0,1,0}\cap I_{0,0,0,1}=I+\langle x^{47}y^{52}z^{53}w^{59}\rangle$ which coincides with our expectations based on computations in Singular.
 \end{example}
\begin{example}
 Let $I=\langle x^{41},y^{41},z^{41},x^{40}y^5z^5,x^5y^{40}z^5,x^5y^5z^{40}\rangle\subset\mathbb K[x,y,z]$. It will be proven in Example~\ref{ex1} that $I$ is a good ideal. All the new monomials can only be obtained from powers of non-corners:
 $$I_{1,0,0}=I+\langle x^{39}y^{10}z^{10}\rangle,$$
 $$I_{2,0,0}=I_{1,0,0}+\langle x^{38}y^{15}z^{15}\rangle,$$
 $$\ldots$$
 $$I_{6,0,0}=I_{5,0,0}+\langle x^{34}y^{35}z^{35}\rangle.$$
 Here the line stabilizes. We similarly get $I_{0,6,0}$ and $I_{0,0,6}$. Each of these three ideals has twelve minimal generators in total. Intersecting them we will get $$\tilde I=I_{6,0,0}\cap I_{0,6,0}\cap I_{0,0,6}=I+\langle x^{34}y^{35}z^{35},x^{35}y^{34}z^{35},x^{35}y^{35}z^{34}\rangle.$$ If we compute successive quotients via computer algebra, the result is the following:
$I^2:I^1$ has 7 minimal generators, that is, $|G(I^2:I^1)|=7$; $|G(I^3:I^2)|=9$; $|G(I^4:I^3)|=12$; $|G(I^5:I^4)|=16$; $|G(I^6:I^5)|=21$; $|G(I^7:I^6)|=27$; $|G(I^8:I^7)|=31$; $|G(I^9:I^8)|=33$; $|G(I^{10}:I^9)|=33$; $|G(I^{11}:I^{10})|=31$; $|G(I^{12}:I^{11})|=24$; $|G(I^{13}:I^{12})|=18$; $|G(I^{14}:I^{13})|=13$; $|G(I^{15}:I^{14})|=9$.  
$I^{15}:I^{14}$ finally coincides with the ideal obtained above (but we still can not be sure this is the Ratliff--Rush closure of $I$ assuming that we are not using our formula). It takes much time to perform these computations using computer algebra, whereas the computation of $I_{6,0,0}$, $I_{0,6,0}$ and $I_{0,0,6}$ and their intersection is much easier and can even be done by hand in this example.

 \end{example}
 \section {A bit more about good and bad ideals}
Let $I$ be an ideal that satisfies the necessary condition, but does not satisfy the sufficient condition. How could we possibly figure out whether it is a good or a bad ideal? We will start with ideals that have only one extra generator except $\mu_i$. Let $I=\langle \mu_1,\ldots, \mu_n, m\rangle$, where $\mu_i=x_i^{d_i}$ and $m=x_1^{\alpha_1}\cdots x_n^{\alpha_n}$ with $\frac{\alpha_1}{d_1}+\ldots+\frac{\alpha_n}{d_n}\ge 1$. Then the minimal generators of $I^l$ will be of the form $\mu_1^{k_1}\cdots \mu_n^{k_n}m^k$, where $k_1+\ldots+k_n+k=l$. We will first of all show that if $d=\lcm(d_1,\ldots,d_n)$, then $m^d$ is not needed as a minimal generator of $I^d$. Indeed, $m^d=x_1^{d\alpha_1}\cdots x_n^{d\alpha_n}=\mu_1^{\frac{d\alpha_1}{d_1}}\cdots \mu_n^{\frac{d\alpha_n}{d_n}}$ and $\frac{d\alpha_1}{d_1}+\ldots+\frac{d\alpha_n}{d_n}=d(\frac{\alpha_1}{d_1}+\ldots+\frac{\alpha_n}{d_n})\ge d$. Therefore, $m^d\in \langle \mu_1,\ldots, \mu_n\rangle^d$. Let $K$ be the smallest positive integer such that $m^K\in \langle \mu_1,\ldots, \mu_n\rangle^K$. As we have seen, $d=\lcm (d_1,\ldots, d_n)$ is an upper bound for $K$ (but this upper bound can be easily improved). Therefore, for any $l$, the minimal generators of $I^l$ will be of the form $\mu_1^{k_1}\cdots \mu_n^{k_n}m^k$, where $k_1+\ldots+k_n+k=l$ and $k\le K-1$. Conversely, each monomial of this form is a minimal generator of $I^l$. Clearly, such a monomial belongs to a box whose sum of coordinates is $l-1$ if and only if $m^k$ belongs to a box whose sum of coordinates is $k-1$ (if $k=0$, the monomial is just a corner and this case is trivial). Therefore, the conclusion is the following: for all $k\le K-1$ 
we need to check if $m^k$ belongs to a box whose sum of coordinates is $k-1$. If the answer is positive, $I$ is good and otherwise it is bad.
\begin{example}
Let $I=\langle x^{10}, y^{10}, z^{10}, x^2y^2z^8\rangle\subset\mathbb K[x,y,z]$. Let $m=x^2y^2z^8$. Then $m^5\in\langle\mu_1,\mu_2,\mu_3\rangle^5$, but $m^4\not\in\langle\mu_1,\mu_2,\mu_3\rangle^4$, thus $K=5$. Further, $m^2=x^4y^4z^{16}\in B_{0,0,1}$, $m^3=x^6y^6z^{24}\in B_{0,0,2}$ and $m^4=x^8y^8z^{32}\in B_{0,0,3}$. Therefore, $I$ is a good ideal.
\end{example}
The situation does not change much if $I$ has more generators. 

Let $I=\langle \mu_1,\ldots, \mu_n, m_1,\ldots,m_t\rangle$. Let $K_i$ be the smallest positive integer such that $m_i^{K_i}\in \langle \mu_1,\ldots, \mu_n\rangle^{K_i}$, $1\le i\le t$. Then minimal generators of $I^l$ will be of the form $\mu_1^{k_1}\cdots \mu_n^{k_n}m_1^{j_1}\cdots m_t^{j_t}$, where $k_1+\ldots+k_n+j_1+\ldots+j_t=l$ and $j_i\le K_i-1$, $1\le i\le t$. Note that the converse is not true: unlike in the case with only one additional generator, not every monomial of this form is a minimal generator. Therefore, we want to check whether each of them belongs to a box with the sum of coordinates \emph{at least} $l-1$. As before, such a monomial belongs to a box whose sum of coordinates is at least $l-1$ if and only if $m_1^{j_1}\cdots m_t^{j_t}$ belongs to a box whose sum of coordinates is at least $j_1+\ldots+j_t-1$ (if $j_1=\ldots=j_t=0$, the monomial is just a corner and this case is trivial). Therefore, it is enough to check whether all these monomials belong to boxes with the sums of coordinates greater or equal to the expected ones. If the answer is positive, the ideal is good, otherwise it is bad. Finding $K_i$ requires knowing the interaction of $m_i$ with $\mu_1,\ldots,\mu_n$. We could have defined $K_i$ in a different way (and thus make them smaller), for instance,
$$K_i=\min\{K\in \mathbb{N} \mid m_i^{K}\in \langle \mu_1, \ldots, \mu_n, m_1,\ldots,m_{i-1}, \hat{m}_i, m_{i+1},\ldots, m_t\rangle^{K}\}$$
($\hat{m}_i$ means that $m_i$ is omitted), but this would require a bit more justification in general.
\begin{example}
Let $I=\langle x^5,y^5,z^5,x^2y^4z,x^4y^2z\rangle\subset\mathbb K[x,y,z]$. Then $K_1=3$ since $m_1^3=x^6y^{12}z^3$ is divisible by $\mu_1\mu_2^2$, but $m_1^2=x^4y^8z^2\not\in\langle\mu_1,\mu_2,\mu_3\rangle^2$. The same holds for $m_2$, so $K_2=3$. Therefore, we would like to check all the monomials $m_1^{j_1}m_2^{j_2}$ with $0\le j_1\le j_2\le 2$ (we might take $j_1\le j_1$ due to the symmetry of the ideal). The case $j_1=j_2=0$ is not to be considered, as discussed earlier. If $j_1=0,j_2=1$, then $m_2\in B_{0,0,0}$. If $j_1=0, j_2=2$, then $m_2^2=x^8y^4z^2\in B_{1,0,0}$. Note, however, that this monomial is not a minimal generator since it is strictly divisible by $m_1\mu_1=x^7y^4z$. If $j_1=j_2=1$, then $m_1m_2=x^6y^6z^2\in B_{1,1,0}$. This is the only box where this monomial can be placed. In particular, this implies it is not a minimal generator of $I^2$. Indeed, $x^6y^6z^2$ is strictly divisible by by $\mu_1\mu_2$. The last case to consider is $j_1=1,j_2=2$, but then $m_1m_2^2$ can not be a minimal generator of $I^3$ since $m_1m_2$ was not a minimal generator of $I^2$, as we have already seen. Now, for any $l\ge 1$, minimal generators of $I^l$ (up to shifts by corners) can not be of types other than these. Therefore, $I$ is a good ideal. Note that we could have chosen $K_1$ and $K_2$ in a less naive way, as discussed before this example: $K_1=K_2=2$ since $m_1^2$ is divisible by $\mu_2m_2$ and $m_2^2$ is divisible by $\mu_1m_1$. Then the cases to consider would be $(j_1,j_2)=(0,1)$ and $(j_1,j_2)=(1,1)$ (given the symmetry of the ideal).
\end{example}
\begin{example}
\label{ex1}
Let $I=\langle x^{41},y^{41},z^{41},x^{40}y^5z^5,x^5y^{40}z^5,x^5y^5z^{40}\rangle\subset\mathbb K[x,y,z]$. It is not hard to notice that $K_1=K_2=K_3=9$. Also we notice that $m_1m_2$, $m_2m_3$ and $m_1m_3$ are not minimal generators of $I^2$. Thus the only monomials we need to check are powers of $m_i$. We will check powers of $m_1$, others are analogous. We see that $m_1^2=x^{80}y^{10}z^{10}\in B_{1,0,0}$, $m_1^3=x^{120}y^{15}z^{15}\in B_{2,0,0}$, $m_1^4=x^{160}y^{20}z^{20}\in B_{3,0,0}$, $m_1^5=x^{200}y^{25}z^{25}\in B_{4,0,0}$, $m_1^6=x^{240}y^{30}z^{30}\in B_{5,0,0}$, $m_1^7=x^{280}y^{35}z^{35}\in B_{6,0,0}$ and $m_1^8=x^{320}y^{40}z^{40}\in B_{7,0,0}$. Therefore, $I$ is a good ideal. Note that $m_1^8\not\in G(I^8)$ since it is divisible by $\mu_1^7\cdot m_2$ and  $\mu_1^7\cdot m_3$. Again, a less naive choice of $K_i$ would result into $K_1=K_2=K_3=8$ and we would not need to consider the case $(j_1,j_2,j_3)=(8,0,0)$.

\end{example}
 \section{Powers of good ideals}
A natural question to ask is whether powers of good ideals also good. The answer is clearly positive if $n=1$, but in most of the other cases the answer is negative. 

Let $I$ be a good ideal. As before, for nonnegative integers $a_1,\ldots,a_n$, by $B_{a_1,\ldots,a_n}$ we denote the corresponding box, associated to $I$. But $I^k$ will determine its own boxes: $x_1^{kd_1},\ldots,x_n^{kd_n}$ are minimal generators of $I^k$, thus the new boxes will have sizes $kd_1,\ldots,kd_n$. For nonnegative integers $b_1,\ldots, b_n$  we denote by $B^k_{b_1,\ldots,b_n}$ the corresponding box, associated to $I^k$, that is, $B^k_{b_1,\ldots,b_n}=([b_1kd_1,(b_1+1)kd_1]\times[b_2kd_2,(b_2+1)kd_2]\times\ldots\times[b_nkd_n,(b_n+1)kd_n])\cap \mathbb{N}^n$.

\begin{proposition}
\label{prop}
Let $I$ be a good ideal and $a_1,\ldots, a_n$ be nonnegative integers. For $k\ge 1$, let $b_1=\lfloor\frac{a_1}{k}\rfloor$, \ldots, $b_n=\lfloor\frac{a_n}{k}\rfloor$. Then $B_{a_1,\ldots,a_n}\subseteq B^k_{b_1,\ldots,b_n}$.
\end{proposition}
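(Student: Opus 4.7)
The plan is to unwind both box definitions and check coordinatewise that any point in $B_{a_1,\ldots,a_n}$ satisfies the inequalities defining $B^k_{b_1,\ldots,b_n}$. This is a purely combinatorial check on integer floor functions; no property of $I$ beyond the definition of its $\mu_i$ is needed.

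First I would fix an arbitrary point $(c_1,\ldots,c_n)\in B_{a_1,\ldots,a_n}$, so that by definition $a_i d_i \le c_i \le (a_i+1)d_i$ for every $i$. Then, from the definition of $b_i=\lfloor a_i/k\rfloor$, the fundamental inequality $b_i k \le a_i < (b_i+1)k$ holds, and since both sides are integers this gives $b_i k \le a_i \le (b_i+1)k - 1$, equivalently $a_i + 1 \le (b_i+1)k$.

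Multiplying the lower bound by $d_i$ gives $b_i k d_i \le a_i d_i \le c_i$, and multiplying the upper bound $a_i+1 \le (b_i+1)k$ by $d_i$ gives $c_i \le (a_i+1)d_i \le (b_i+1)k d_i$. Combining these shows that for each coordinate $b_i k d_i \le c_i \le (b_i+1)k d_i$, which is precisely the condition for $(c_1,\ldots,c_n)$ to lie in $B^k_{b_1,\ldots,b_n}$, establishing the desired inclusion.

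There is essentially no obstacle here: the statement is a routine verification that a small box associated to $I$ sits inside the larger box associated to $I^k$ whose corners are the $\mu_i^k$ multiplied by appropriate corner exponents. The only subtle point is the passage from the strict inequality $a_i < (b_i+1)k$ to $a_i + 1 \le (b_i+1)k$, which uses integrality of $a_i$ and $k$; this is what makes the right-hand endpoint land exactly at $(b_i+1)k d_i$ rather than overshooting.
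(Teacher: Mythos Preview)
Your proof is correct and follows essentially the same approach as the paper: both use the floor-function inequalities $b_ik\le a_i$ and $a_i+1\le(b_i+1)k$, then multiply through by $d_i$ to obtain the coordinatewise containment of the small box in the large one.
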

\begin{proof}
Clearly, $b_ik\le a_i$ and $a_i+1\le(b_i+1)k$ for all $i\in\{1,\ldots,n\}$. Therefore, $b_ikd_i\le a_id_i<(a_i+1)d_i\le(b_i+1)kd_i$, as desired.
\end{proof}

\begin{proposition}
\label{good2}
Let $I\in \mathbb K[x,y]$ be a good ideal. Then all powers of $I$ are also good.
\end{proposition}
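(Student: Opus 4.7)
The plan is to exploit the fact, stated earlier in Remark (following the sufficient condition), that for $n=2$ the necessary condition and the sufficient condition for being a good ideal coincide. So goodness of a bivariate $\mathfrak m$-primary monomial ideal is equivalent to the single arithmetic inequality $\frac{\alpha}{d_1}+\frac{\beta}{d_2}\ge 1$ for every minimal generator $x^\alpha y^\beta$. Our task thus reduces to verifying the corresponding inequality for every minimal generator of $I^k$, relative to the box sizes determined by $I^k$.

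First I would pin down what those box sizes are. Since $\mu_1=x^{d_1}$ and $\mu_2=y^{d_2}$ lie in $G(I)$, the corners $\mu_1^k=x^{kd_1}$ and $\mu_2^k=y^{kd_2}$ lie in $G(I^k)$ by Remark~\ref{rem} (``corners are needed''), which applies because $I$ is good. Hence the boxes associated to $I^k$ have dimensions $kd_1\times kd_2$, and $I^k$ is good if and only if every minimal generator $x^\alpha y^\beta$ of $I^k$ satisfies
\[
\frac{\alpha}{kd_1}+\frac{\beta}{kd_2}\ge 1,\qquad\text{i.e.,}\qquad \frac{\alpha}{d_1}+\frac{\beta}{d_2}\ge k.
\]

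Next I would take an arbitrary $g\in G(I^k)$ and write $g=m_1\cdots m_k$ with $m_i\in G(I)$, which is possible by property~(3) of monomial ideals listed in Section~2 (the standard generating set of a product). Writing $m_i=x^{\alpha_i}y^{\beta_i}$, and applying the necessary condition to $I$ (which holds since $I$ is good) gives $\frac{\alpha_i}{d_1}+\frac{\beta_i}{d_2}\ge 1$ for each $i$. Summing over $i=1,\ldots,k$ and using $\alpha=\sum\alpha_i$, $\beta=\sum\beta_i$ yields
\[
\frac{\alpha}{d_1}+\frac{\beta}{d_2}=\sum_{i=1}^{k}\left(\frac{\alpha_i}{d_1}+\frac{\beta_i}{d_2}\right)\ge k,
\]
which is exactly the desired inequality. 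By the $n=2$ equivalence of the necessary and sufficient conditions, $I^k$ is good.

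There is no real obstacle here: the argument is essentially bookkeeping once one knows (i) corners of $I$ remain minimal generators of $I^k$, so the box sizes multiply by $k$, and (ii) the $n=2$ rigidity makes the numerical condition both necessary and sufficient. The only conceptual point worth emphasising is why the argument fails in general $n\ge 3$: for larger $n$ the analogous numerical condition $\frac{\alpha_1}{d_1}+\cdots+\frac{\alpha_n}{d_n}\ge 1$ is still necessary but no longer sufficient, so the additive inequality summed above does not force $I^k$ to be good, which is consistent with the author's remark that powers of good ideals usually fail to be good when $n\ge 3$.
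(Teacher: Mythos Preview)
Your proof is correct. The paper in fact offers two arguments for this proposition: its primary proof works directly with the box structure, taking a minimal generator $m$ of $I^{kl}$, locating it in an old box $B_{a_1,a_2}$ with $a_1+a_2=kl-1$, passing via Proposition~\ref{prop} to the new box $B^k_{b_1,b_2}$ with $b_i=\lfloor a_i/k\rfloor$, and then squeezing $b_1+b_2$ between $l-2$ and $l-1$ by elementary floor inequalities. Your argument---reducing everything to the numerical inequality $\alpha/d_1+\beta/d_2\ge 1$ via the $n=2$ coincidence of the necessary and sufficient conditions, and then summing over factors---is precisely the alternative proof the paper records in the Remark immediately following Proposition~\ref{good2}. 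The box-counting route is a bit heavier but exercises the machinery (Proposition~\ref{prop}, Proposition~\ref{prop2}) that is reused in Propositions~\ref{bad4} and~\ref{bad3}; your route is shorter and conceptually cleaner but, as you yourself note, is genuinely special to $n=2$.
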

\begin{proof}
Let $x^{d_1}$ and $y^{d_2}$ be minimal generators of $I$, that is, $d_1$ and $d_2$ determine the size of the boxes associated to $I$. For $k\ge 1$ consider $(I^k)^l=I^{kl}$. Assume that $m$ is a minimal generator of $I^{kl}$. Since $I$ is a good ideal, we know that $m\in B_{a_1,a_2}$ for some $a_1$ and $a_2$ such that $a_1+a_2=kl-1$. From Proposition~\ref{prop} we also know that $m\in B^k_{b_1,b_2}$, where $b_i=\lfloor\frac{a_i}{k}\rfloor$. We want to show that $b_1+b_2=l-1$. On the one hand,
$$b_1+b_2=\left\lfloor\frac{a_1}{k}\right\rfloor+\left\lfloor\frac{a_2}{k}\right\rfloor\le\left\lfloor\frac{a_1+a_2}{k}\right\rfloor=\left\lfloor\frac{kl-1}{k}\right\rfloor=l-1,$$
on the other hand,
\begin{multline*}
b_1+b_2\ge\frac{a_1-k+1}{k}+\frac{a_2-k+1}{k}=
\frac{a_1+a_2-2k+2}{k}=\frac{kl-1-2k+2}{k}=\\=\frac{k(l-2)+1}{k}>l-2.
\end{multline*}
 Since $b_1+b_2$ is an integer, we get $b_1+b_2=l-1$ which finishes the proof.
\end{proof}
\begin{remark}
Another way to prove this proposition is to note that for $n=2$ the necessary and sufficient conditions for being a good ideal coincide: $I$ is a good ideal if and only if for any minimal generator $x^{\alpha_1}y^{\alpha_2}$ the following holds: $$\frac{\alpha_1}{d_1}+\frac{\alpha_2}{d_2}\ge1.$$ Since $I$ is a good ideal, for each minimal generator of $I$ the condition above holds. Minimal generators of $I^k$ are products of $k$ minimal generators of $I$, that is, if $x^{\beta_1}y^{\beta_2}$ is a minimal generator of $I^k$, we have $\frac{\beta_1}{d_1}+\frac{\beta_2}{d_2}\ge k$, that is, $\frac{\beta_1}{kd_1}+\frac{\beta_2}{kd_2}\ge 1$,  which means that $I^k$ is a good ideal as well.
\end{remark}
Let $I$ be a good ideal, let $m$ be some monomial. Note that among boxes containing $m$ there is always the largest box in the sense of partial order on coordinates of boxes: there exists a box $B_{s_1,\ldots,s_n}$ containing $m$ such that, if $B_{t_1,\ldots,t_n}$ also contains $m$, then $s_i\ge t_i$ for all $i$.
\begin{proposition}
\label{prop2}
Let $I$, $k$, $a_i$, $b_i$ be as in Proposition~\ref{prop}. Let $m$ be a monomial. Then the following holds:
\begin{itemize}
\item if $B_{a_1,\ldots,a_n}$ is the unique box containing $m$ among boxes associated to $I$, then $B^k_{b_1,\ldots,b_n}$ is the unique box containing $m$ among boxes associated to $I^k$;
\item if $B_{a_1,\ldots,a_n}$ is the largest box containing $m$ among boxes associated to $I$, then $B^k_{b_1,\ldots,b_n}$ is the largest box containing $m$ among boxes associated to $I^k$.
\end{itemize}
\end{proposition}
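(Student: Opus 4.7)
The plan is to reduce both statements to inequalities on the exponent vector of $m$ and then apply the classical iterated floor identity $\lfloor \lfloor x/p \rfloor / q \rfloor = \lfloor x/(pq) \rfloor$ for positive integers $p, q$ and any integer $x \geq 0$. Write $m = x_1^{\alpha_1}\cdots x_n^{\alpha_n}$; then $B_{c_1,\ldots,c_n}$ contains $m$ iff $c_i d_i \le \alpha_i \le (c_i+1)d_i$ for every $i$, and $B^k_{c_1,\ldots,c_n}$ contains $m$ iff $c_i k d_i \le \alpha_i \le (c_i+1) k d_i$ for every $i$. Because these conditions decouple across coordinates, both the ``unique'' and ``largest'' properties are checked one variable at a time.

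I would handle the second (largest) bullet first, since the first will follow from it. The largest box associated to $I$ that contains $m$ is obtained by taking, in each coordinate, the largest integer $a_i$ with $a_i d_i \le \alpha_i$, so $a_i = \lfloor \alpha_i / d_i \rfloor$. Analogously, the largest box associated to $I^k$ containing $m$ has $i$-th coordinate $\lfloor \alpha_i / (k d_i) \rfloor$. Applying the iterated floor identity gives
\[
b_i = \lfloor a_i / k \rfloor = \bigl\lfloor \lfloor \alpha_i / d_i \rfloor / k \bigr\rfloor = \lfloor \alpha_i / (k d_i) \rfloor,
\]
so $(b_1,\ldots,b_n)$ is exactly the tuple of largest-box coordinates for $I^k$, proving the second bullet.

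For the first bullet, note that $B_{c_1,\ldots,c_n}$ is the \emph{unique} box (associated to $I$) containing $m$ precisely when, in each coordinate, $\alpha_i$ is not a positive multiple of $d_i$ (positive multiples of $d_i$ are the shared boundaries of adjacent boxes; the value $\alpha_i = 0$ forces $c_i = 0$ uniquely). Since any positive multiple of $k d_i$ is in particular a positive multiple of $d_i$, the same condition guarantees uniqueness of the box associated to $I^k$ containing $m$. That unique box must coincide with the largest box containing $m$, so by the previous paragraph it equals $B^k_{b_1,\ldots,b_n}$.

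I do not anticipate a genuine obstacle here: the content is really just the floor identity plus a careful reading of the boundary behavior of the boxes. The only point warranting a moment of care is the $\alpha_i = 0$ corner case in the uniqueness argument (one must observe that $0$ is not a \emph{positive} multiple of $d_i$, so it does not spoil uniqueness), and the verification that uniqueness for $I$ is inherited by $I^k$ and not just the reverse implication.
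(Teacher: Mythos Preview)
Your proof is correct. The route differs mildly from the paper's, so a brief comparison is worthwhile.

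For the second bullet, you compute the coordinates of the largest boxes directly: the largest old box containing $m$ has $a_i=\lfloor\alpha_i/d_i\rfloor$, the largest new box has $i$-th coordinate $\lfloor\alpha_i/(kd_i)\rfloor$, and the iterated floor identity $\bigl\lfloor\lfloor\alpha_i/d_i\rfloor/k\bigr\rfloor=\lfloor\alpha_i/(kd_i)\rfloor$ matches the two. The paper instead argues by contradiction: assuming some new box $B^k_{c_1,\ldots,c_n}$ with $c_i>b_i$ contains $m$, it derives $\alpha_i\ge c_ikd_i\ge(b_i+1)kd_i\ge(a_i+1)d_i$, contradicting the maximality of $a_i$. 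Your computation is a clean closed-form description; the paper's contradiction avoids invoking the floor identity but is otherwise equivalent in strength.

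For the first bullet you reverse the order of the paper: you first establish the second bullet, then observe that uniqueness of the old box forces each $\alpha_i$ to avoid the positive multiples of $d_i$, hence of $kd_i$, giving uniqueness of the new box; that unique new box is then identified via the second bullet. The paper proves the two bullets independently, obtaining the identification of the unique new box from Proposition~\ref{prop} (the inclusion $B_{a_1,\ldots,a_n}\subseteq B^k_{b_1,\ldots,b_n}$) rather than from the ``largest'' statement. Your handling of the boundary case $\alpha_i=0$ (distinguishing ``positive multiple of $d_i$'' from ``divisible by $d_i$'') is in fact slightly more careful than the paper's phrasing.
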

\begin{proof}
\hfill
\begin{itemize}
\item $m$ is contained in a unique box, associated to $I$ if and only if none of the coordinates of $m$ is divisible by the corresponding $d_i$. Then none of the coordinates of $m$ is divisible by the corresponding $kd_i$ or, equivalently, $m$ is contained in a unique box, associated to $I^k$. The rest follows from Proposition~\ref{prop}.
\item Let $m=x_1^{\alpha_1}\cdots x_n^{\alpha_n}$. We know that $B_{a_1,\ldots,a_n}\subseteq B^k_{b_1,\ldots,b_n}$, but we assume that $B^k_{b_1,\ldots,b_n}$ is not the largest one, that is, there is some $B^k_{c_1,\ldots,c_n}$ containing $m$ with at least one $i$ such that $c_i>b_i$. Without loss of generality we assume that $i=1$. Then $\alpha_1\ge c_1kd_1\ge (b_1+1)kd_1=(b_1k+k)d_1\ge (a_1+1)d_1$, that is, $B_{a_1,\ldots,a_n}$ is not the largest box containing $m$ among boxes associated to $I$, which is a contradiction.
\end{itemize}
\end{proof}
Henceforth, by old boxes we will mean boxes associated to $I$ and by new boxes we will mean boxes associated to $I^k$. Clearly, a new box consists of $k^n$ old boxes.
\begin{proposition}
\label{bad4}
Let $I\in \mathbb K[x_1,\ldots,x_n]$ be a good ideal and $n\ge 4$. Then for all $k\ge 2$, $I^k$ is a bad ideal.
\end{proposition}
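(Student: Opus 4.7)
My plan is to exhibit, for every $k\ge 2$ and $n\ge 4$, an explicit minimal generator of $(I^k)^2=I^{2k}$ that fits only into new boxes whose coordinates sum to $0$ instead of the required $1$, thereby violating the box decomposition principle for $I^k$. The witness I would use is the $I$-corner
$$m:=\mu_1\mu_2\mu_3^{k-1}\mu_4^{k-1},$$
which is well-defined because $n\ge 4$ and whose total corner degree is $1+1+(k-1)+(k-1)=2k$.

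The first step is to verify that $m\in G(I^{2k})=G((I^k)^2)$. This is immediate from Remark~\ref{rem}, which asserts that any corner $\mu_1^{k_1}\cdots\mu_n^{k_n}$ is a minimal generator of $I^{k_1+\cdots+k_n}$; here the exponents $(1,1,k-1,k-1,0,\ldots,0)$ sum to exactly $2k$.

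The second step is to locate $m$ inside the new box structure of size $kd_1\times\cdots\times kd_n$. The condition for $B^k_{b_1,\ldots,b_n}$ to contain $m$ is $b_ikd_i\le\deg_{x_i}(m)\le(b_i+1)kd_i$ for every $i$. Since the nonzero exponents of $m$ are $d_1$, $d_2$, $(k-1)d_3$, $(k-1)d_4$ and since $k\ge 2$ gives both $1<k$ and $k-1<k$, every such inequality forces $b_i=0$; the remaining slots are trivially $b_i=0$. Equivalently, Proposition~\ref{prop2} applied to the largest old box containing $m$, namely $B_{1,1,k-1,k-1,0,\ldots,0}$, yields directly that the largest new box containing $m$ is $B^k_{0,\ldots,0}$. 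Thus every new box containing $m$ has coordinate sum $0\ne 1$, and $I^k$ is bad. I do not anticipate a real obstacle here; the hypothesis $n\ge 4$ is used only to ensure that $2k$ can be split into four positive parts each strictly less than $k$, which is what forces the unique containing new box to sit at the origin.
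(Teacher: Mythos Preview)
Your proof is correct and follows essentially the same approach as the paper: the same witness $m=\mu_1\mu_2\mu_3^{k-1}\mu_4^{k-1}$, the same identification of the largest old box $B_{1,1,k-1,k-1,0,\ldots,0}$, and the same appeal to Proposition~\ref{prop2} to conclude that the largest new box is $B^k_{0,\ldots,0}$. The only cosmetic difference is that you first verify $m\in G(I^{2k})$ via Remark~\ref{rem} and then violate the definition of goodness directly, whereas the paper simply notes $m\in I^{2k}$ and invokes Proposition~\ref{remeq}; neither adds anything substantive over the other.
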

\begin{proof}
Consider $I^k$, $k\ge 2$. We will show that the box decomposition principle fails already in $(I^k)^2=I^{2k}$. Let $x_1^{d_1}=\mu_1,\ldots,x_n^{d_n}=\mu_n$ be minimal generators of $I$. Then $m=\mu_1\mu_2\mu_3^{k-1}\mu_4^{k-1}\in I^{2k}$. The largest old box containing $m$ is $B_{1,1,k-1,k-1,0,\ldots,0}$, therefore, by Proposition~\ref{prop2}, the largest new box containing $m$ is $B^k_{\lfloor\frac{1}{k}\rfloor,\lfloor\frac{1}{k}\rfloor,\lfloor\frac{k-1}{k}\rfloor,\lfloor\frac{k-1}{k}\rfloor,0,0,\ldots,0}=B^k_{0,0,\ldots,0}$ and $0+0+\ldots+0<1$. Therefore, by Proposition~\ref{remeq}, $I$ is a bad ideal.
\end{proof}

The case $n=3$ is special in the following sense:
\begin{proposition}
\label{bad3}
Let $I\in \mathbb K[x,y,z]$ be a good ideal. Then the following holds:
\begin{itemize}
\item if $I$ has exactly three generators, that is, $I=\langle \mu_1, \mu_2, \mu_3 \rangle$, then $I^2$ is a good ideal and all $I^k$, $k\ge 3$, are bad;
\item if $I$ has more than three generators, then all $I^k$, $k\ge 2$, are bad.
\end{itemize}
\end{proposition}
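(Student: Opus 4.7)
The plan is to handle the two cases separately, exhibiting explicit witness monomials for the ``bad'' assertions and performing a direct computation for the ``good'' assertion.

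When $I = \langle \mu_1, \mu_2, \mu_3 \rangle$, the minimal generators of $(I^2)^l = I^{2l}$ are exactly the corners $\mu_1^{a_1}\mu_2^{a_2}\mu_3^{a_3}$ with $a_1+a_2+a_3 = 2l$ (every product of $2l$ generators is of this form, and no two such corners divide one another). For such a corner at the point $(a_1 d_1, a_2 d_2, a_3 d_3)$, the largest new $I^2$-box containing it is $B^2_{\lfloor a_1/2\rfloor,\lfloor a_2/2\rfloor,\lfloor a_3/2\rfloor}$, whose coordinate sum equals $l - \tfrac{1}{2}\#\{i : a_i\text{ odd}\}$. Since $\sum a_i = 2l$ is even, the number of odd $a_i$'s is $0$ or $2$ (never $1$ or $3$), so this sum is always at least $l-1$; Proposition~\ref{remeq} then yields that $I^2$ is good.

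For the three-generator case with $k \geq 3$, I would exhibit $M := \mu_1^{k-1}\mu_2^{k-1}\mu_3^2 \in I^{2k}$ as a counterexample at level $l = 2$. It is a minimal generator of $I^{2k}$ because the only corner $\mu_1^{a_1}\mu_2^{a_2}\mu_3^{a_3}$ with $(a_1,a_2,a_3) \leq (k-1,k-1,2)$ componentwise and $a_1+a_2+a_3 = 2k$ is $M$ itself. Its coordinates $((k-1)d_1, (k-1)d_2, 2d_3)$ are each strictly less than $k d_i$ once $k \geq 3$, so $M$ lies only in $B^k_{0,0,0}$, whose coordinate sum $0$ is strictly less than $l-1 = 1$.

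For the case where $I$ has more than three generators, fix a non-corner minimal generator $m_1 = x^{\alpha_1}y^{\alpha_2}z^{\alpha_3}$ of $I$, so that $\alpha_i < d_i$ for every $i$. To prove $I^2$ is bad, consider $\mu_1\mu_2\mu_3 m_1 \in I^4$: regardless of whether this specific product is itself minimal, it is divisible by some minimal generator $M''$ of $I^4$, whose coordinates are then componentwise bounded by $(d_1+\alpha_1, d_2+\alpha_2, d_3+\alpha_3)$, each strictly below $2 d_i$. Hence $M''$ lies only in $B^2_{0,0,0}$, with sum $0 < 1 = l-1$. For $k \geq 3$ the same $\mu_1^{k-1}\mu_2^{k-1}\mu_3^2 \in I^{2k}$ works verbatim: passing if necessary to a minimal divisor, the coordinates remain bounded by $((k-1)d_1,(k-1)d_2,2d_3)$, each strictly below $k d_i$, again forcing the monomial into $B^k_{0,0,0}$ alone.

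The main subtle point is that, once $I$ has extra generators, the chosen witness need not itself be a minimal generator of the relevant power; the clean observation is that passing to a minimal divisor automatically preserves the coordinate bounds that force the monomial into the single box $B^k_{0,0,0}$, so a single uniform ``cramped coordinates'' argument covers all of the bad cases.
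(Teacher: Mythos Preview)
Your proof is correct and follows essentially the same approach as the paper: the same parity argument for $I^2$ being good in the three-generator case, and the same type of witness monomials (up to permuting indices) for the bad cases, checked against Proposition~\ref{remeq}. Two minor remarks: the paper treats the more-than-three-generator case with a single uniform witness $m\mu_1\mu_2^{k-1}\mu_3^{k-1}$ for all $k\ge 2$ rather than splitting into $k=2$ and $k\ge 3$; and since Proposition~\ref{remeq}(2) applies to \emph{any} monomial in $I^{2k}$, your step of passing to a minimal divisor is correct but unnecessary.
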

\begin{proof}
\hfill
\begin{itemize}
\item Assume that $I=\langle \mu_1, \mu_2, \mu_3\rangle$. Consider $(I^2)^l=I^{2l}$. According to Proposition~\ref{remeq}, it is enough to show that every $m=\mu_1^{l_1}\mu_2^{l_2}\mu_3^{l_3}$, where $l_1+l_2+l_3=2l$, can be placed into a new box with the sum of coordinates greater or equal to $l-1$. Since $m\in B_{l_1,l_2,l_3}$, we conclude that $m\in B^2_{\lfloor\frac{l_1}{2}\rfloor,\lfloor\frac{l_2}{2}\rfloor,\lfloor\frac{l_3}{2}\rfloor}$. Note that at least one of $l_i$ is even, say, $l_1$ is even. Then ${\lfloor\frac{l_1}{2}\rfloor+\lfloor\frac{l_2}{2}\rfloor+\lfloor\frac{l_3}{2}\rfloor}\ge \frac{l_1}{2}+\frac{l_2-1}{2}+\frac{l_3-1}{2}=l-1$. Therefore, $I^2$ is a good ideal. 

Now we want to show that $I^k$, $k\ge 3$, is a bad ideal. We will show that the box decomposition principle fails already in $(I^k)^2=I^{2k}$. Consider $m=\mu_1^2\mu_2^{k-1}\mu_3^{k-1}\in I^{2k}$. Then the largest old box containing $m$ is $B_{2,k-1,k-1}$. Therefore, by Proposition~\ref{prop2}, the largest new box containing $m$ is $B^k_{\lfloor\frac{2}{k}\rfloor,\lfloor\frac{k-1}{k}\rfloor,\lfloor\frac{k-1}{k}\rfloor}=B^k_{0,0,0}$ and $0+0+0<1$. By Proposition~\ref{remeq}, $I^k$ is a bad ideal.
\item Assume that $I$ has at least 4 generators and let $m$ be any minimal generator of $I$ which is not a corner. Consider $I^k$, $k\ge 2$. We will show that the box decomposition principle fails already in $(I^k)^2=I^{2k}$. Consider $m_1=m\mu_1\mu_2^{k-1}\mu_3^{k-1}\in I^{2k}$. This monomial belongs to $B_{1,k-1,k-1}$ (note that this is the unique old box containing $m_1$). By Proposition~\ref{prop2}, the unique new box containing $m_1$ is $B^k_{\lfloor\frac{1}{k}\rfloor,\lfloor\frac{k-1}{k}\rfloor,\lfloor\frac{k-1}{k}\rfloor}=B^k_{0,0,0}$ and $0+0+0<1$. By Proposition~\ref{remeq}, $I^k$ is a bad ideal.
\end{itemize}
\end{proof}
\begin{remark}
\label{rem3}
Let $I=\langle \mu_1,\ldots, \mu_n\rangle$. Then, summarizing the propositions above, we obtain that $I^k$ is good if and only if $k=1$ or $n\le 2$ or $(n,k)=(3,2)$. We will use this later when we discuss the connection to Freiman ideals in Section~11.
\end{remark}

Another natural question to ask is whether all powers of good ideals are Ratliff--Rush (even though powers of good ideals are bad in most cases). We will state a sufficient condition that will help us to construct a family of examples of ideals whose all powers are Ratliff--Rush.

\begin{lemma}
\label{lem2}
Let $I$ be a good ideal and let $a_1,\ldots,a_n,a,t$ be nonnegative integers such that $a_1+\ldots+a_n=a$ and $t\ge 1$. Then 
$$I^{a+t}:\langle\mu_1^{a_1}\cdots \mu_n^{a_n}\rangle=\sum_{\substack{t_1,\ldots,t_n\ge0\\t_1+\ldots+t_n=t-1}}\mu_1^{t_1}\cdots \mu_n^{t_n}I_{t_1+a_1,\ldots,t_n+a_n}.$$
In particular, if $a=0$, the equality above is the box decomposition of $I^t$ and if $t=1$, this is just Proposition~\ref{prop1}.
\end{lemma}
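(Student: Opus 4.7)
The plan is to prove both containments separately, with both relying on the colon characterization $I_{c_1,\ldots,c_n}=I^{c_1+\cdots+c_n+1}:\langle\mu_1^{c_1}\cdots\mu_n^{c_n}\rangle$ of Proposition~\ref{prop1}. Applied with $c_i=t_i+a_i$ and $\sum t_i=t-1$ (so $\sum c_i=a+t-1$), it reads
\[I_{t_1+a_1,\ldots,t_n+a_n}=I^{a+t}:\langle\mu_1^{t_1+a_1}\cdots\mu_n^{t_n+a_n}\rangle,\]
and this identity is the common hinge for both inclusions.

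For $\supseteq$, fix any tuple $(t_1,\ldots,t_n)$ with $\sum t_i=t-1$ and take a monomial $\mu_1^{t_1}\cdots\mu_n^{t_n}\cdot p$ with $p\in I_{t_1+a_1,\ldots,t_n+a_n}$. Multiplying by $\mu_1^{a_1}\cdots\mu_n^{a_n}$ yields $p\cdot\mu_1^{t_1+a_1}\cdots\mu_n^{t_n+a_n}$, which lies in $I^{a+t}$ by the identity above. Hence each summand on the right sits inside $I^{a+t}:\langle\mu_1^{a_1}\cdots\mu_n^{a_n}\rangle$, and summing over all admissible tuples gives one inclusion.

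For $\subseteq$, take a monomial $m$ with $m\cdot\mu_1^{a_1}\cdots\mu_n^{a_n}\in I^{a+t}$. Since $I$ is good, this product is divisible by some minimal generator $g\in G(I^{a+t})$ lying in a box $B_{b_1,\ldots,b_n}$ with $\sum b_i=a+t-1$. From $g\mid m\mu_1^{a_1}\cdots\mu_n^{a_n}$ and the fact that the $x_i$-exponent of $g$ is at least $b_id_i$, one deduces that the $x_i$-exponent of $m$ is at least $(b_i-a_i)d_i$, hence at least $T_id_i$ where $T_i:=\max(b_i-a_i,0)$. Therefore $\mu_1^{T_1}\cdots\mu_n^{T_n}$ divides $m$, and the crucial observation is $\sum T_i\ge\sum(b_i-a_i)=t-1$. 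Choose any $(t_1,\ldots,t_n)$ with $0\le t_i\le T_i$ and $\sum t_i=t-1$, set $r_i=T_i-t_i$, and write $m=\mu_1^{t_1}\cdots\mu_n^{t_n}\cdot\mu_1^{r_1}\cdots\mu_n^{r_n}\cdot m'$ for a monomial $m'$. Membership of $\mu_1^{r_1}\cdots\mu_n^{r_n}\cdot m'$ in $I_{t_1+a_1,\ldots,t_n+a_n}$ then follows at once from the colon characterization, since multiplying this candidate by $\mu_1^{t_1+a_1}\cdots\mu_n^{t_n+a_n}$ recovers $\mu_1^{a_1}\cdots\mu_n^{a_n}\cdot m\in I^{a+t}$. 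This places $m$ in the corresponding summand and finishes the argument.

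The main obstacle is that the box coordinates $(b_1,\ldots,b_n)$ of the chosen divisor $g$ need not dominate $(a_1,\ldots,a_n)$ coordinate-wise, so the naive assignment $t_i=b_i-a_i$ may be negative. The remedy is to pass to the nonnegative truncation $T_i=\max(b_i-a_i,0)$, for which the total $\sum T_i$ still exceeds the required $t-1$, leaving enough slack to select a valid tuple $\underline t\le\underline T$ summing to exactly $t-1$. The $t=1$ case recovers Proposition~\ref{prop1}, and the $a=0$ case recovers the box decomposition $I^t=\sum_{t_1+\cdots+t_n=t-1}\mu_1^{t_1}\cdots\mu_n^{t_n}I_{t_1,\ldots,t_n}$, confirming that the statement is the common generalization of both.
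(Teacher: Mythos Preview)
Your proof is correct. The inclusion $\supseteq$ is identical to the paper's, resting on Proposition~\ref{prop1}. For $\subseteq$ you take a slightly different route: the paper generates the colon ideal explicitly as $\left\langle \frac{m}{\gcd(m,\mu_1^{a_1}\cdots\mu_n^{a_n})}\ \middle|\ m\in G(I^{a+t})\right\rangle$ and then splits into two cases according to whether the box coordinates $(s_1,\ldots,s_n)$ of $m$ dominate $(a_1,\ldots,a_n)$; when they do not, it finishes by observing divisibility by some $\mu_i$, which is always a generator of $I_{t_1+a_1,\ldots,t_n+a_n}$. You bypass the case split by introducing the truncation $T_i=\max(b_i-a_i,0)$, noting $\sum T_i\ge t-1$, selecting any admissible $(t_1,\ldots,t_n)\le(T_1,\ldots,T_n)$, and then invoking the colon characterization of Proposition~\ref{prop1} a second time to verify membership of the cofactor. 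This is a genuine streamlining: the same ingredients, but the uniform treatment avoids the separate bookkeeping of the ``some $s_i<a_i$'' case and makes the role of Proposition~\ref{prop1} as the sole hinge more transparent.
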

\begin{proof}
$\supseteq$ Let $m\in \mu_1^{t_1}\cdots \mu_n^{t_n}I_{t_1+a_1,\ldots,t_n+a_n} $ for some $t_1,\ldots, t_n$. Then $m=m_1\mu_1^{t_1}\cdots \mu_n^{t_n}$, where $m_1\in I_{t_1+a_1,\ldots,t_n+a_n}$, that is, $m_1\mu_1^{t_1+a_1}\cdots \mu_n^{t_n+a_n}\in I^{a+t}$. Therefore, 
$$m\mu_1^{a_1}\cdots \mu_n^{a_n}=m_1\mu_1^{t_1+a_1}\cdots \mu_n^{t_n+a_n}\in I^{a+t},$$ which implies $m\in I^{a+t}:\langle\mu_1^{a_1}\cdots \mu_n^{a_n}\rangle$.

$\subseteq$ $I^{a+t}:\langle\mu_1^{a_1}\cdots \mu_n^{a_n}\rangle = \left\langle\frac{m}{\gcd(m,\mu_1^{a_1}\cdots \mu_n^{a_n})}\mid m\in G(I^{a+t})\right\rangle$. Fix $m\in G(I^{a+t})$. Then $m\in B_{s_1,\ldots,s_n}$, $s_1+\ldots+s_n=a+t-1$, that is, $m=x_1^{s_1d_1+\alpha_1}\cdots x_n^{s_nd_n+\alpha_n}$, $0\le \alpha_i\le d_i$. We distinguish two cases considering the differences $s_i-a_i$.
\begin{enumerate}
\item 
Assume that for all $i$ we have $s_i\ge a_i$. Put $t_i:=s_i-a_i$. Then $t_1+\ldots+t_n=t-1$ and 
\begin{multline*}
\frac{m}{\gcd(m,\mu_1^{a_1}\cdots \mu_n^{a_n})}=\frac{m}{\mu_1^{a_1}\cdots \mu_n^{a_n}}=x_1^{(s_1-a_1)d_1+\alpha_1}\cdots x_n^{(s_n-a_n)d_n+\alpha_n}=\\=x_1^{t_1d_1+\alpha_1}\cdots x_n^{t_nd_n+\alpha_n}=\mu_1^{t_1}\cdots \mu_n^{t_n}\cdot x_1^{\alpha_1}\cdots x_n^{\alpha_n}\in \\ \in \mu_1^{t_1}\cdots \mu_n^{t_n}I_{t_1+a_1,\ldots,t_n+a_n},
\end{multline*}

since $x_1^{\alpha_1}\cdots x_n^{\alpha_n}\in I^{a+t}:\langle \mu_1^{s_1}\cdots \mu_n^{s_n}\rangle=I_{s_1,\ldots,s_n}=I_{t_1+a_1,\ldots,t_n+a_n}$.

\item 
Assume that among differences $s_i-a_i$ we have $r$ nonnegative and $n-r$ negative ones, $n-r\ge 1$. Without loss of generality, 
$s_1-a_1\ge 0, \ldots, s_r-a_r \ge 0, s_{r+1}-a_{r+1}<0, \ldots, s_n-a_n<0$. Then 
$\frac{m}{\gcd(m,\mu_1^{a_1}\cdots \mu_n^{a_n})}
=x_1^{(s_1-a_1)d_1+\alpha_1}\cdots x_r^{(s_r-a_r)d_r+\alpha_r}
=\mu_1^{s_1-a_1}\cdots \mu_n^{s_r-a_r}\cdot x_1^{\alpha_1}\cdots x_r^{\alpha_r}$. We know that $(s_1-a_1)+\ldots +(s_n-a_n)=a+t-1-a=t-1$, but since at least one difference is negative, we obtain $(s_1-a_1)+\ldots+(s_r-a_r)\ge t$. At least one of these differences is positive, say, $s_1-a_1\ge 1$. Therefore, $\mu_1^{s_1-a_1}\cdots \mu_r^{s_r-a_r}\cdot x_1^{\alpha_1}\cdots x_r^{\alpha_r}$ is divisible by some $\mu_1^{t_1+1}\mu_2^{t_2}\cdots \mu_r^{t_r}$, where $t_1+\ldots+t_r=t-1$. We put $t_{r+1}=t_{r+2}=\ldots=t_n=0$. 
Then $\mu_1^{t_1+1}\mu_2^{t_2}\cdots \mu_r^{t_r}= \mu_1\cdot \mu_1^{t_1}\mu_2^{t_2}\cdots \mu_r^{t_r}\mu_{r+1}^{t_{r+1}}\cdots \mu_n^{t_n}\in \mu_1^{t_1}\cdots \mu_n^{t_n}I_{a_1+t_1,\ldots,a_n+t_n}$, where $t_1+\ldots+t_n=t-1$, since $\mu_1\in I_{a_1+t_1,\ldots,a_n+t_n}$. This finishes the proof.

\end{enumerate}
\end{proof}
Recall that given a good ideal $I$, there exists a coloring as in Theorem~\ref{th2}, given by a finite disjoint union of cones. Let $L$ be the number defined in the end of Section~5.
\begin{proposition}
\label{prop3}
Let $I$ be a good ideal. Then $I^k$ is Ratliff--Rush for all $k\ge L+1$.
\end{proposition}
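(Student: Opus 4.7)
The plan is to establish the non-trivial inclusion $\tilde{I^k}\subseteq I^k$. Recall that $\tilde{I^k}$ equals the stable value of the chain $I^{(l+1)k}:I^{lk}$ for large $l$; fix such an $l$ and let $m=x_1^{\alpha_1}\cdots x_n^{\alpha_n}\in I^{(l+1)k}:I^{lk}$. Denote by $B_{a_1,\ldots,a_n}$ the largest box (in coordinatewise partial order) containing $m$, i.e.\ $a_j=\lfloor\alpha_j/d_j\rfloor$. I will show $m\in I^k$ by splitting on the value of $a_1+\cdots+a_n$; the hypothesis $k\ge L+1$ will only be needed in the borderline case.

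The first step bounds $a_1+\cdots+a_n$ from below by $k-1$. For any $(c_1,\ldots,c_n)$ with $c_1+\cdots+c_n=lk$, Remark~\ref{rem} gives $\mu_1^{c_1}\cdots\mu_n^{c_n}\in I^{lk}$, so $m\mu_1^{c_1}\cdots\mu_n^{c_n}\in I^{(l+1)k}$. By Proposition~\ref{remeq} this product lies in some box $B_{b_1,\ldots,b_n}$ with $\sum b_j\ge(l+1)k-1$; since its largest box is $B_{a_1+c_1,\ldots,a_n+c_n}$, we have $\sum b_j\le\sum a_j+lk$, and combining yields $\sum a_j\ge k-1$. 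If in fact $\sum a_j\ge k$, then $\mu_1^{a_1}\cdots\mu_n^{a_n}\in I^{\sum a_j}\subseteq I^k$ divides $m$, so $m\in I^k$ and we are done.

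It remains to treat $\sum a_j=k-1$. Since $k-1\ge L$, Remark~\ref{rem2} forbids $(a_1,\ldots,a_n)$ from lying in a zero-dimensional cone of the coloring of Theorem~\ref{th2}; hence the cone $C$ containing $(a_1,\ldots,a_n)$ has some non-fixed coordinate $i$, and consequently $(a_1,\ldots,a_{i-1},a_i+lk,a_{i+1},\ldots,a_n)$ also lies in $C$, giving $I_{a_1,\ldots,a_{i-1},a_i+lk,a_{i+1},\ldots,a_n}=I_{a_1,\ldots,a_n}$. Now Lemma~\ref{lem2} yields
$$I^{(l+1)k}:\langle\mu_i^{lk}\rangle=\sum_{\substack{t_1,\ldots,t_n\ge 0\\ t_1+\cdots+t_n=k-1}}\mu_1^{t_1}\cdots\mu_n^{t_n}\, I_{t_1,\ldots,t_{i-1},t_i+lk,t_{i+1},\ldots,t_n}.$$
Since $m\mu_i^{lk}\in I^{(l+1)k}$, the monomial $m$ lies in this sum and is divisible by some $\mu_1^{t_1}\cdots\mu_n^{t_n}\, g$ with $\sum t_j=k-1$ and $g\in G(I_{t_1,\ldots,t_i+lk,\ldots,t_n})$. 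The divisibility $\mu_1^{t_1}\cdots\mu_n^{t_n}\mid m$ together with the maximality of $B_a$ forces $t_j\le a_j$ for every $j$, and then $\sum t_j=k-1=\sum a_j$ forces $t=a$. Therefore $m/(\mu_1^{a_1}\cdots\mu_n^{a_n})\in I_{a_1,\ldots,a_i+lk,\ldots,a_n}=I_{a_1,\ldots,a_n}$, and the box decomposition of $I^k$ (Lemma~\ref{lem2} with zero exponent vector and $t=k$) then gives $m\in\mu_1^{a_1}\cdots\mu_n^{a_n}\, I_{a_1,\ldots,a_n}\subseteq I^k$.

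I expect the main obstacle to be the final case: the decomposition provided by Lemma~\ref{lem2} naturally involves the enlargement $I_{a+lke_i}$, which in general strictly contains $I_a$, so one must find a way to collapse it back. The bound $k\ge L+1$ enters exactly here, through Remark~\ref{rem2}, to guarantee that the cone containing $a$ has a non-fixed coordinate along which the collapse is available; without this, the point $a$ could coincide with a zero-dimensional cone's vertex and the argument would break.
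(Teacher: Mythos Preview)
Your proof is correct and uses the same two pillars as the paper---Lemma~\ref{lem2} and the positive-dimensionality fact of Remark~\ref{rem2}---but the organisation is genuinely different, and in fact somewhat more direct.

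The paper argues ideal-theoretically: it bounds $I^{kl+k}:I^{kl}$ above by the full intersection $\bigcap_{c_1+\cdots+c_n=kl} I^{kl+k}:\langle\mu^{c}\rangle$, expands each factor via Lemma~\ref{lem2}, distributes the intersection over the resulting sums (using that for monomial ideals $(\sum A_i)\cap(\sum B_j)=\sum_{i,j}A_i\cap B_j$), and observes that any ``cross term'' with $t\ne t'$ is already in $I^k$ because $\lcm(\mu^{t},\mu^{t'})\in I^k$. What survives is $\sum_t\bigcap_c \mu^{t}I_{t+c}$, and for each fixed $t$ the cone argument picks a non-fixed coordinate $i$ and the single choice $c=kl\,e_i$ collapses the intersection to $\mu^{t}I_{t}$. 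You instead work monomial-by-monomial: you first locate $m$ in its maximal box $B_{a}$, dispose of the case $\sum a_j\ge k$ trivially, and in the borderline case $\sum a_j=k-1$ apply the cone argument \emph{to $a$ itself} to select the coordinate $i$, then invoke Lemma~\ref{lem2} for just the one colon ideal $I^{(l+1)k}:\langle\mu_i^{lk}\rangle$. The nice step that replaces the paper's cross-term analysis is your observation that $\mu^{t}\mid m$ forces $t\le a$ coordinatewise, hence $t=a$ by the equality of sums; this pins down the summand containing $m$ without ever expanding an intersection. Your route is shorter and avoids the distributivity manoeuvre, while the paper's route makes the global structure of the intersection more visible; both rest on exactly the same two ingredients.
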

\begin{proof}
Let $k\ge L+1$. We want to show that $I^{kl+k}:I^{kl}=I^k$ for all $l\ge 0$. It is clear that $I^{kl+k}:I^{kl}\supseteq I^k$.
For the other inclusion note that $I^{kl+k}:I^{kl}\subseteq I^{kl+k}:\langle \mu_1, \ldots, \mu_n\rangle^{kl}$. Thus it is sufficient to show that $I^{kl+k}:\langle \mu_1, \ldots, \mu_n\rangle^{kl}\subseteq I^k$. We know that $I^{kl+k}:\langle \mu_1, \ldots, \mu_n\rangle^{kl}$ equals the intersection of ideals of the type $I^{kl+k}:\langle\mu_1^{k_1}\cdots \mu_n^{k_n}\rangle$, where $k_1+\ldots+k_n=kl$. From Lemma~\ref{lem2} we know that 
$$I^{kl+k}:\langle\mu_1^{k_1}\cdots \mu_n^{k_n}\rangle=\sum_{\substack{t_1,\ldots,t_n\ge0\\t_1+\ldots+t_n=k-1}}\mu_1^{t_1}\cdots \mu_n^{t_n}I_{t_1+k_1,\ldots,t_n+k_n}.$$
Therefore, $$I^{kl+k}:\langle \mu_1, \ldots, \mu_n\rangle^{kl}= \bigcap_{\substack{k_1,\ldots,k_n\ge 0\\k_1+\ldots+k_n=kl}}\left(
\sum_{\substack{t_1,\ldots,t_n\ge0\\t_1+\ldots+t_n=k-1}}\mu_1^{t_1}\cdots \mu_n^{t_n}I_{t_1+k_1,\ldots,t_n+k_n}\right).$$

Let $t_1,\ldots, t_n, t'_1,\ldots, t'_n$ be nonnegative integers such that $t_1+\ldots+t_n=t'_1+\ldots+t'_n=k-1$.
Every minimal generator of $\mu_1^{t_1}\cdots \mu_n^{t_n}I_{t_1+k_1,\ldots,t_n+k_n}$ is of the form $\mu_1^{t_1}\cdots \mu_n^{t_n}m$, where $m\in G(I_{t_1+k_1,\ldots,t_n+k_n})$ and similarly every minimal generator of $\mu_1^{t'_1}\cdots \mu_n^{t'_n}I_{t'_1+k'_1,\ldots,t'_n+k'_n}$ is of the form  $\mu_1^{t'_1}\cdots \mu_n^{t'_n}m'$, where $m'\in G(I_{t'_1+k'_1,\ldots,t'_n+k'_n})$. 
Thus any minimal generator of $(\mu_1^{t_1}\cdots \mu_n^{t_n}I_{t_1+k_1,\ldots,t_n+k_n})\bigcap \\ \bigcap (\mu_1^{t'_1}\cdots \mu_n^{t'_n}I_{t'_1+k'_1,\ldots,t'_n+k'_n})$ is of the form $\lcm(\mu_1^{t_1}\cdots \mu_n^{t_n}m, \mu_1^{t'_1}\cdots \mu_n^{t'_n}m')$, which is divisible by 
$$\lcm(\mu_1^{t_1}\cdots \mu_n^{t_n}, \mu_1^{t'_1}\cdots \mu_n^{t'_n})=\mu_1^{\max(t_1,t'_1)}\cdots \mu_n^{\max(t_n,t'_n)}.$$ 
Note that if $(t_1,\ldots, t_n)\not=(t'_1,\ldots, t'_n)$, then $\max(t_1,t'_1)+\ldots+\max(t_n,t'_n)\ge k$ and so 
$$\mu_1^{t_1}\cdots \mu_n^{t_n}I_{t_1+k_1,\ldots,t_n+k_n}\bigcap \mu_1^{t'_1}\cdots \mu_n^{t'_n}I_{t'_1+k'_1,\ldots,t'_n+k'_n}\subseteq I^k.$$
Therefore, it remains to consider the sum of intersections which share a common \\$(t_1,\ldots,t_n)$, that is, we are left with 
$$\sum_{\substack{t_1,\ldots,t_n\ge0\\t_1+\ldots+t_n=k-1}}\left(\bigcap_{\substack{k_1,\ldots,k_n\ge 0\\k_1+\ldots+k_n=kl}}\mu_1^{t_1}\cdots \mu_n^{t_n}I_{t_1+k_1,\ldots,t_n+k_n}\right).$$
Fix $t_1,\ldots,t_n$. Then $(t_1,\ldots,t_n)$ belongs to some cone $C$ of our coloring. This cone can not have dimension 0. Indeed, if we assume the opposite, then the vertex of this cone is $(t_1,\ldots,t_n)$ itself, but $t_1+\ldots+t_n=k-1\ge L$ and, given the way $L$ was defined, the only case when this could potentially be possible is  $t_1+\ldots+t_n=L$. But, as mentioned in  Remark~\ref{rem2}, $L$ can not be attained at a zero dimensional cone, therefore, we get a contradiction. Thus the dimension of $C$ is at least 1. Then at least one coordinate of the vertex of $C$, say, the first one, is not underlined. Then $(t_1+kl,t_2,\ldots,t_n)\in C$ as well as $(t_1,\ldots,t_n)$ and thus $I_{t_1+kl,t_2,\ldots,t_n}=I_{t_1,\ldots,t_n}$. 
Then taking $k_1=kl, k_2=\ldots=k_n=0$ we obtain
\begin{multline*}
\bigcap_{\substack{k_1,\ldots,k_n\ge 0\\k_1+\ldots+k_n=kl}}\mu_1^{t_1}\cdots \mu_n^{t_n}I_{t_1+k_1,\ldots,t_n+k_n}\subseteq \\ \subseteq \mu_1^{t_1}\cdots \mu_n^{t_n}I_{t_1+kl,t_2,\ldots,t_n}=\mu_1^{t_1}\cdots \mu_n^{t_n}I_{t_1,\ldots,t_n}.
\end{multline*}

Summing over all $(t_1,\ldots,t_n)$, we obtain
\begin{multline*}
\sum_{\substack{t_1,\ldots,t_n\ge0\\t_1+\ldots+t_n=k-1}}\left(\bigcap_{\substack{k_1,\ldots,k_n\ge 0\\k_1+\ldots+k_n=kl}}\mu_1^{t_1}\cdots \mu_n^{t_n}I_{t_1+k_1,\ldots,t_n+k_n}\right)\subseteq \\ \subseteq \sum_{\substack{t_1,\ldots,t_n\ge0\\t_1+\ldots+t_n=k-1}}\mu_1^{t_1}\cdots \mu_n^{t_n}I_{t_1,\ldots,t_n}=I^k,
\end{multline*}
which finishes the proof.
\end{proof}
\begin{example}
Let $I$ be an ideal of $\mathbb K[x,y]$ generated by all the possible monomials of degree $2d$ except $x^dy^d$, $d\ge 2$. Then $I$ is not Ratliff--Rush, but all higher powers are. Indeed, $I$ is a good ideal by the sufficient condition and $I_{a_1,a_2}=I'$ for all $(a_1,a_2)\not = (0,0)$, where $I'=I+\langle x^dy^d \rangle$. Then we can choose the following coloring: $\{C_{1,0}, C_{\underline{0},1}, C_{\underline{0},\underline{0}}\}$. Therefore, $L=1$ and thus all powers of $I$ except $I$ itself are Ratliff--Rush. $I$ is not Ratliff--Rush since $\tilde I = I'$.
\end{example}

So now we know that if we have a good ideal and its coloring with the maximal sum of coordiates of vertices of cones equal to $L$, then all $I^{k}$ with $k\ge L+1$ are Ratliff--Rush. It is natural to mention ideals for which $L=0$, or, in other words, ideals for which there is a coloring consisting of a single cone $C_{0,0,\ldots,0}$.
\begin{definition}
Let $I$ be a good ideal such that for all $(a_1,\ldots,a_n)$ we have $I_{a_1,\ldots,a_n}=I$. Then we call $I$ a\textbf{ very good } ideal. 
\end{definition}
Clearly, any ideal in $\mathbb K[x]$ is a very good one. If $I$ is a very good ideal, then from Proposition~\ref{prop3} all its powers are Ratliff--Rush. It is also clear that if $I$ is a very good ideal, then in particular $I^2=IJ$, where $J=\langle \mu_1,\ldots,\mu_n\rangle$. We will now show that these conditions are in fact equivalent.
\begin{proposition}
\label{prop4}
Let $I$ be an $\mathfrak{m}$-primary monomial ideal in $\mathbb K[x_1,\ldots,x_n]$. Then $I$ is very good if and only if $I^2=IJ$, where $J=\langle \mu_1,\ldots,\mu_n\rangle$.
\end{proposition}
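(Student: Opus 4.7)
The plan is to prove the two implications separately, with the forward direction being an immediate consequence of the box decomposition and the backward direction requiring a reduction to a quotient computation.

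For the forward direction, I would start from the box decomposition of $I^2$ established in the paper: every minimal generator of $I^2$ lies in some $B_{a_1,\ldots,a_n}$ with $a_1+\ldots+a_n=1$, so $I^2=\sum_{i=1}^n \mu_i I_{0,\ldots,0,1,0,\ldots,0}$, where the $1$ sits in the $i$-th position. If $I$ is very good, each $I_{0,\ldots,0,1,0,\ldots,0}$ equals $I$, and the right-hand side collapses to $(\mu_1+\ldots+\mu_n)I=JI$.

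For the backward direction, I would first upgrade $I^2=IJ$ to $I^l=IJ^{l-1}$ for all $l\ge 1$ by a one-line induction: $I^{l+1}=I\cdot I^l=I\cdot IJ^{l-1}=I^2\cdot J^{l-1}=IJ\cdot J^{l-1}=IJ^l$. From this I would deduce that $I$ is a good ideal: any $m\in G(I^l)$ must be divisible by some generator $g\prod\mu_i^{b_i}$ of $IJ^{l-1}$ with $g\in G(I)$ and $b_1+\ldots+b_n=l-1$, and by minimality of $m$ equality must hold. Since $g\in B_{0,\ldots,0}$, the product lies in $B_{b_1,\ldots,b_n}$ with sum of coordinates $l-1$, confirming the box decomposition principle.

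The heart of the proof is then to show $I_{a_1,\ldots,a_n}=I$ for every tuple $(a_1,\ldots,a_n)$; the inclusion $I=I_{0,\ldots,0}\subseteq I_{a_1,\ldots,a_n}$ is free from Corollary~\ref{cor}. For the reverse, set $N=\mu_1^{a_1}\cdots\mu_n^{a_n}$ and $a=a_1+\ldots+a_n$. By Proposition~\ref{prop1} and $I^{a+1}=IJ^a$,
\[
I_{a_1,\ldots,a_n}=IJ^a:\langle N\rangle.
\]
Using properties (5) and (6) of the preliminaries (quotient of a principal ideal by a principal ideal, and distributivity of colon by a principal ideal over sums), this quotient is generated by the monomials $\bigl(g\mu_1^{b_1}\cdots\mu_n^{b_n}\bigr)/\gcd\bigl(g\mu_1^{b_1}\cdots\mu_n^{b_n},N\bigr)$ as $g=x_1^{\alpha_1}\cdots x_n^{\alpha_n}$ ranges over $G(I)$ and $(b_1,\ldots,b_n)$ over tuples with $b_1+\ldots+b_n=a$.

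A direct computation gives that the generator above equals $x_1^{e_1}\cdots x_n^{e_n}$ with $e_i=\max(0,\alpha_i+(b_i-a_i)d_i)$. The main (but easy) step is a two-case analysis: if $b_i\ge a_i$ for every $i$, then because the $b_i$ sum to $a=\sum a_i$ we must have $b_i=a_i$ for all $i$, so the monomial is simply $g\in I$; otherwise some $b_j>a_j$, whence $e_j\ge(b_j-a_j)d_j\ge d_j$, so the monomial is divisible by $\mu_j\in I$. Either way the generator lies in $I$, giving $I_{a_1,\ldots,a_n}\subseteq I$ and completing the equality. I do not expect a real obstacle: the only point requiring some care is ensuring the distributivity of the colon over the sum (which is explicitly listed in the preliminaries) and handling the possibility that $g$ is itself one of the corners $\mu_j$, which the case analysis accommodates without modification.
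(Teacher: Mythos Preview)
Your proof is correct and follows essentially the same route as the paper: both directions proceed exactly as you outline, with the key step in the backward implication being the upgrade $I^2=IJ\Rightarrow I^l=IJ^{l-1}$, from which goodness and $I_{a_1,\ldots,a_n}=I$ follow. The only difference is one of detail: the paper simply reads off $I_{a_1,\ldots,a_n}=I$ directly from the decomposition $I^l=\sum_{k_1+\cdots+k_n=l-1}\mu_1^{k_1}\cdots\mu_n^{k_n}I$, whereas you carry out the colon computation $IJ^a:\langle\mu_1^{a_1}\cdots\mu_n^{a_n}\rangle$ explicitly via the two-case analysis---a perfectly valid (and more transparent) way to justify the same conclusion.
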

\begin{proof}
As mentioned before, one implication is trivial. For the other implication, assume that $I^2=IJ$, where $J=\langle \mu_1,\ldots,\mu_n\rangle$. Then for any $k\ge 1$ we have $$I^k=J^{k-1}I=\sum_{k_1+\ldots+k_n=k-1}\mu_1^{k_1}\cdots \mu_n^{k_n}I.$$
Thus the box decomposition principle holds and moreover $I_{a_1,\ldots,a_n}=I$ for all $(a_1,\ldots,a_n)$.
\end{proof}

\begin{example}
Let $I$ be an $\mathfrak m$-primary monomial ideal in $\mathbb K[x_1,\ldots,x_n]$, $n\ge 2$ such that the following holds: there is a pair of indices $i\not=j$ such that for each minimal generator of $I$, except the corners $\mu_1,\ldots, \mu_n$, its $x_i$-exponent is greater or equal to $d_i/2$ and its $x_j$-exponent is greater or equal to $d_j/2$. Then products of non-corners are not needed as minimal generators of $I^2$, thus $$I^2=\sum_{k_1+\ldots+k_n=1}\mu_1^{k_1}\cdots \mu_n^{k_n}I=IJ,$$ where, as before, $J=\langle\mu_1,\ldots,\mu_n\rangle$. Therefore, $I$ is a very good ideal and thus all its powers are Ratliff--Rush. This example generalizes the example
 constructed in \cite{AS}.
\end{example}
\begin{proposition}
\label{prop5}
Let $I=\langle \mu_1,\ldots, \mu_n\rangle$. Then $I^k$ is good if and only if it is very good.
\end{proposition}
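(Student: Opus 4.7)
The plan is to reduce to Proposition~\ref{prop4}: since very-goodness of an $\mathfrak m$-primary monomial ideal is equivalent to the single equation ``square equals ideal times corner ideal,'' it suffices to show that when $I^k$ is good, $(I^k)^2=I^k\cdot J_k$, where $J_k:=\langle\mu_1^k,\ldots,\mu_n^k\rangle$ is the corner ideal attached to $I^k$ (its new pure generators have $x_i$-exponent $kd_i$). The inclusion $I^kJ_k\subseteq(I^k)^2$ is automatic because $J_k\subseteq I^k$, so the entire content lies in the reverse inclusion. The converse implication (very good $\Rightarrow$ good) is immediate from the definitions.

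For the reverse inclusion I would take an arbitrary minimal generator $m$ of $(I^k)^2=I^{2k}$. Since $I$ is generated precisely by the corners $\mu_1,\ldots,\mu_n$, every such $m$ has the very rigid form $m=\mu_1^{a_1}\cdots\mu_n^{a_n}$ with $a_1+\cdots+a_n=2k$. Goodness of $I^k$ forces $m$ to lie in some new box $B^k_{b_1,\ldots,b_n}$ (of side-lengths $kd_i$) with $b_1+\cdots+b_n=1$; hence exactly one $b_i$ equals $1$ and the rest are $0$. Unpacking the defining inequalities $b_jk\le a_j\le(b_j+1)k$ of this box, I would read off that $a_i\ge k$, i.e.\ $\mu_i^k$ divides $m$, and that the complementary factor
$$m/\mu_i^k=\mu_i^{a_i-k}\prod_{j\ne i}\mu_j^{a_j}$$
has total $\mu$-degree $(a_i-k)+\sum_{j\ne i}a_j=2k-k=k$ and so lies in $I^k$. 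Therefore $m\in\mu_i^k\cdot I^k\subseteq J_k\cdot I^k$, which gives $(I^k)^2\subseteq I^kJ_k$ as required, and Proposition~\ref{prop4} then concludes that $I^k$ is very good.

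I do not anticipate any serious obstacle. The only step that needs care is the translation between the new-box coordinates $(b_1,\ldots,b_n)$ and the exponents $(a_1,\ldots,a_n)$ of $m$; once the constraint $\sum b_j=1$ is written out in terms of the $a_j$, both the divisibility by $\mu_i^k$ and the membership of the quotient in $I^k$ are forced by the fact that $m$ is a pure-corner product of total $\mu$-degree $2k$. One may note in passing that this also explains Remark~\ref{rem3}: the very-good condition demands, by pigeonhole, that any product of $2k$ of the $\mu_j$'s contains at least $k$ copies of some single $\mu_i$, which is exactly the numerical constraint $k=1$, $n\le 2$, or $(n,k)=(3,2)$ appearing there.
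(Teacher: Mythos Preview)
Your proof is correct and follows essentially the same route as the paper: both arguments reduce very-goodness of $I^k$ to the single equation $(I^k)^2=I^kJ_k$ via Proposition~\ref{prop4}, and both verify this equation by exploiting that every minimal generator of $I^{2k}$ is a pure corner product $\mu_1^{a_1}\cdots\mu_n^{a_n}$ with $\sum a_i=2k$.

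The only difference is cosmetic. The paper first reduces (WLOG) to $\mu_i=x_i$ and then computes the box ideal $(I^k)_{1,0,\ldots,0}$ directly from its definition, observing that the degree-$2k$ monomials in $B^k_{1,0,\ldots,0}$ are precisely those divisible by $x_1^k$, so dividing by $x_1^k$ recovers $G(I^k)$; from $(I^k)_{1,0,\ldots,0}=\cdots=(I^k)_{0,\ldots,0,1}=I^k$ one then reads off $(I^k)^2=I^kJ_k$. You instead invoke goodness of $I^k$ to place each $m\in G(I^{2k})$ in a box $B^k_{b_1,\ldots,b_n}$ with $\sum b_j=1$, and translate the box inequalities into $a_i\ge k$ for the unique $i$ with $b_i=1$. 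Your version avoids the WLOG and makes the appeal to Proposition~\ref{prop4} explicit, but the underlying mechanism is identical. Your closing remark on the numerical constraint $k=1$, $n\le 2$, or $(n,k)=(3,2)$ is also correct and gives a clean alternative reading of Remark~\ref{rem3}.
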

\begin{proof}
One way to prove this statement is to consider all the cases for which $I^k$ is good (Remark~\ref{rem3}) and directly check that $I^{2k}=I^kJ$ in each case. Note that here $J=\langle \mu_1^k,\ldots, \mu_n^k\rangle$.

 Another way is the following. Since generators of $I^k$ are monomials in variables $\mu_1,\ldots,\mu_n$, we can, without loss of generality, assume $\mu_1=x_1,\ldots,\mu_n=x_n$, even if $I$ is not equigenerated. Let $I^k$ be a good ideal and let $B^k_{a_1,\ldots, a_n}$ denote the boxes, associated to $I^k$. It is enough to show that $I^k_{1,0,\ldots,0}=I^k$, all other $I^k_{a_1,\ldots,a_n}$ with $a_1+\ldots+a_n=1$ are analogous (by $I^k_{a_1,\ldots,a_n}$ we mean $(I^k)_{a_1,\ldots,a_n}$ and not $(I_{a_1,\ldots,a_n})^k$). $I^k$ is minimally generated by all monomials of degree $k$ and $I^{2k}$ is minimally generated by all monomials of degree $2k$. By definition, $I^k_{1,0,\ldots,0}$ is generated by monomials in $B^k_{1,0\ldots, 0}\cap G(I^{2k})$, divided by $\mu_1^k=x_1^k$. In other words, we take all monomials of degree $2k$, which are divisible by $x_1^k$, and divide them by $x_1^k$. Clearly, we obtain all monomials of degree $k$ (and only them). Therefore, $I^k_{1,0,\ldots,0}=I^k$ and thus $I^k$ is a very good ideal.
\end{proof}
We remark that this proposition means the following: for $I^k=\langle \mu_1,\ldots, \mu_n\rangle^k$ there is nothing "extra" in boxes $B^k_{1,0\ldots, 0}, \ldots, B^k_{0,\ldots,0,1}$ besides the translations of $I^k$ and the only reason $I^k$ might fail to be a very good ideal is that it fails to be a good ideal, that is, there exists a monomial in $G(I^{2k})$ which only belongs to $B^k_{0,0,\ldots,0}$. Therefore, ideals of such type are either good (and thus very good), or the box decomposition principle should fail already in $(I^k)^2=I^{2k}$. If we look back at the proofs of Proposition~\ref{good2}, Proposition~\ref{bad4} and Proposition~\ref{bad3}, we see that it is indeed the case.

\section{Connection to Freiman ideals}
Let $I$ be an equigenerated monomial ideal with analytic spread $l(I)$. It has been shown in \cite{NG} (Theorem 1.9) that $|G(I^2)|\ge l(I)|G(I)|-{l(I)\choose 2}$. Note that this bound is no longer valid if $I$ is not equigenerated. Indeed, for each $m\ge 6$ there exists a monomial ideal in two variables such that $|G(I)|=m$ and $|G(I^2)|=9$, see \cite{TS}.
\begin{definition}
An equigenerated monomial ideal is called \textbf{Freiman} if 
$$|G(I^2)|= l(I)|G(I)|-{l(I)\choose 2}.$$
\end{definition}
The next proposition is exactly Theorem~2.1 in \cite{FR}, but we give an alternative proof.
\begin{theorem}
\label{th3}
Let $I\subset \mathbb K[x_1,\ldots,x_n]$ be an $\mathfrak m$-primary equigenerated monomial ideal. Then $I$ is Freiman if and only if $I$ satisfies the equivalent conditions from Proposition~\ref{prop4}. In other words, for an  $\mathfrak m$-primary equigenerated monomial ideal $I$ the following are equivalent:
\begin{enumerate}[(1)]
\item $I$ is Freiman;
\item $I$ is very good;
\item $I^2=IJ$,
\end{enumerate} 
where, as before, $J=\langle \mu_1,\ldots,\mu_n\rangle$ and $\mu_i=x_i^d$. 
\end{theorem}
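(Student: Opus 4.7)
The plan is to observe that Proposition~\ref{prop4} already gives $(2)\Leftrightarrow(3)$, so the real task is $(1)\Leftrightarrow(3)$. The key point I would exploit is that for an $\mathfrak m$-primary ideal of $\mathbb K[x_1,\ldots,x_n]$ the analytic spread satisfies $l(I)=n$, so the Freiman equality becomes $|G(I^2)|=nr-\binom{n}{2}$, where $r:=|G(I)|$. Both $I^2$ and $IJ$ are equigenerated in degree $2d$, where $d$ is the common degree of the generators of $I$. This is the crucial structural fact because, in any monomial ideal equigenerated in a single degree $e$, the minimal monomial generators are exactly the monomials of degree $e$ contained in the ideal — no minimal generator can properly divide another. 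Hence $IJ\subseteq I^2$ implies $G(IJ)\subseteq G(I^2)$ as sets of monomials.

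Next I would compute $|G(IJ)|$ directly. A natural generating set is $\{\mu_i g : 1\le i\le n,\ g\in G(I)\}$, of cardinality $nr$. I would show that the only coincidences among these $nr$ products are the commutativity relations $\mu_i\mu_j=\mu_j\mu_i$ between two corners. The argument is short: if $\mu_i g=\mu_j g'$ with $i\ne j$, then $\mu_j\mid g$, and by minimality of $G(I)$ the only element of $G(I)$ divisible by $\mu_j$ is $\mu_j$ itself, forcing $g=\mu_j$ and $g'=\mu_i$. So the $n^2$ corner-corner products collapse to $\binom{n+1}{2}$ distinct monomials, giving
\[
|G(IJ)| \;=\; nr - n^2 + \binom{n+1}{2} \;=\; nr - \binom{n}{2}.
\]

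Combining the two steps, $G(IJ)\subseteq G(I^2)$ gives the inequality $|G(I^2)|\ge nr-\binom{n}{2}$ (which is the lower bound of \cite{NG} in this setting, recovered here by a one-line argument), with equality if and only if $G(IJ)=G(I^2)$, i.e.\ $IJ=I^2$. Therefore $I$ is Freiman precisely when $I^2=IJ$, establishing $(1)\Leftrightarrow(3)$; together with Proposition~\ref{prop4} this completes the proof.

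The only delicate point will be justifying the collision count for $\{\mu_i g\}$: I need to rule out accidental coincidences between corner-noncorner products and corner-corner products, which is exactly what the minimality of $G(I)$ forces. Once that is in place, the equality of generators in the equigenerated setting makes the rest essentially formal, and there is no need to invoke the external bound from \cite{NG} — the counting of $|G(IJ)|$ supplies it.
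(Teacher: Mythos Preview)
Your proposal is correct and takes essentially the same approach as the paper: both use that $l(I)=n$ for an $\mathfrak m$-primary ideal, exploit equigeneration in degree $2d$ so that products of generators are automatically minimal, and count the distinct monomials among $\{\mu_i g : g\in G(I)\}$ to obtain $n|G(I)|-\binom{n}{2}$, concluding that the Freiman equality holds exactly when these exhaust $G(I^2)$, i.e.\ $I^2=IJ$. The only cosmetic difference is bookkeeping---the paper phrases the count as inclusion--exclusion on the sets $\mu_i G(I)$ (pairwise intersections $\{\mu_i\mu_j\}$, triple intersections empty), while you phrase it as a direct collision count on the multiset of products; the underlying divisibility argument (if $\mu_i g=\mu_j g'$ with $i\ne j$ then $\mu_j\mid g$, forcing $g=\mu_j$) is identical.
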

\begin{proof}
Since $I$ is $\mathfrak m$-primary, we have $l(I)=n$. The ideal $I$ is equigenerated, say, in degree $d$, thus $I^2$ is equigenerated in degree $2d$. Note that any product of two elements of $G(I)$ gives us an element of $G(I^2)$. Indeed, if such a product is not in $G(I^2)$, then there is an element in $G(I^2)$ strictly dividing it. But this is impossible since these monomials are both of degree $2d$. Note that different products of two elements from $G(I)$ might of course give us the same monomial from $G(I^2)$. The idea is to list all the different elements from $G(I^2)$ and count them. Clearly, $\mu_1G(I):=\{\mu_1m \mid m\in G(I)\}$ is a set of $|G(I)|$ different elements of $G(I^2)$. We can similarly define sets $\mu_2G(I), \ldots, \mu_nG(I)$. Monomials inside each such set are different. Every pair of different sets, say $\mu_iG(I)$ and $\mu_jG(I)$, has a unique monomial in common, which is $\mu_i\mu_j$. Every triple of different sets has an empty intersection. Thus in all these sets we have $n|G(I)|-{n\choose 2}=l(I)|G(I)|-{l(I)\choose 2}$ different elements from $G(I^2)$. Now it is clear that the Freiman equality holds if and only if there are no other minimal generators of $I^2$ except those in $\mu_1G(I)\cup\ldots\cup\mu_nG(I)$ if and only if $I^2=\langle\mu_1\rangle I+\ldots+\langle\mu_n\rangle I=IJ$.

\end{proof}

\begin{remark}
Note that if $I$ is a very good, but not an equigenerated ideal, the Freiman equality $|G(I^2)|= l(I)|G(I)|-{l(I)\choose 2}$ still holds, but it is not of any particular interest and thus such ideals are not called Freiman. If $I$ satisfies the Freiman equality, but is not an equigenerated ideal, it is not necessarily very good and not even necessarily good. Consider, for instance, $I=\langle x^3,y^3,xy\rangle$.
\end{remark}
\begin{remark}
From Theorem~\ref{th3} we know that an equigenerated $\mathfrak m$-primary monomial ideal is Freiman if and only if it is very good. From Proposition~\ref{prop5} we also know that $I^k=\langle\mu_1,\ldots,\mu_n\rangle^k$, $\mu_i=x_i^{d_i}$, is very good if and only if it is good. Finally, from Remark~\ref{rem3} we know that $I^k=\langle\mu_1,\ldots,\mu_n\rangle^k$, $\mu_i=x_i^{d_i}$, is good if and only if $k=1$, $n\le2$, or $(n,k)=(3,2)$. Therefore, we conclude the following: $I^k=\langle \mu_1,\ldots,\mu_n\rangle^k$ is Freiman (note that here $\mu_i=x_i^d$ since we want the ideal to be equigenerated) if and only if $k=1$, $n\le2$, or $(n,k)=(3,2)$. This can be seen as an alternative proof of Theorem~2.3 in \cite{FR}.

\end{remark}

\bibliographystyle{siam}
\bibliography{abc}

\end{document}